\date{}
\newcommand{\V}[1]{\mbox{\boldmath $ #1 $}}
\def \M{\mathbb{M}}
\newcommand{\bey}{\begin{eqnarray}}
\newcommand{\eey}{\end{eqnarray}}
\newcommand{\beq}{\begin{equation}}
\newcommand{\eeq}{\end{equation}}
\theoremstyle{plain}
\newtheorem{thm}{\hspace{6mm}Theorem}[section]
\newtheorem{pro}{\hspace{6mm}Proposition}[section]
\newtheorem{lem}{\hspace{6mm}Lemma}[section]
\newtheorem{co}{\hspace{6mm}Corollary}[section]
\theoremstyle{definition}
\newtheorem{exam}{\hspace{6mm}Example}[section]
\newtheorem{rem}{\hspace{6mm}Remark}[section]
\title{A grid-overlay finite difference method for the fractional {Laplacian} on arbitrary bounded domains}
\author{
Weizhang Huang\thanks{Department of Mathematics, University of Kansas, Lawrence, Kansas, U.S.A.
{\em whuang@ku.edu}}
\; and
Jinye Shen\thanks{Corresponding author. School of Mathematics, Southwestern University of Finance and Economics,
Chengdu, Sichuan, China. {\em jyshen@swufe.edu.cn}}
}
\begin{document}
\vskip 1cm
\maketitle

\begin{abstract}
A grid-overlay finite difference method is proposed for the numerical approximation of the fractional Laplacian on arbitrary bounded domains.
The method uses an unstructured simplicial mesh and an overlay uniform grid for the underlying domain
and constructs the approximation based on a uniform-grid finite difference approximation and a data transfer
from the unstructured mesh to the uniform grid.
 The method takes full advantages of both uniform-grid finite difference approximation in efficient matrix-vector
multiplication via the fast Fourier transform and unstructured meshes for complex geometries and mesh adaptation.
It is shown that its stiffness matrix is similar to a symmetric and positive definite matrix
and thus invertible if the data transfer has full column rank and positive column sums. Piecewise linear interpolation
is studied as a special example for the data transfer. It is proved that the full column rank and positive column sums of linear interpolation
is guaranteed if the spacing of the uniform grid is smaller than or equal to a positive bound
proportional to the minimum element height of the unstructured mesh.
Moreover, a sparse preconditioner is proposed for the iterative solution of the resulting linear system for the homogeneous Dirichlet problem
of the fractional Laplacian. Numerical examples demonstrate that the new method has similar convergence
behavior as existing finite difference and finite element methods and that the sparse preconditioning is effective.
Furthermore, the new method can readily be incorporated with existing mesh adaptation strategies. Numerical results obtained
by combining with the so-called MMPDE moving mesh method are also presented.
\end{abstract}

\noindent
\textbf{AMS 2020 Mathematics Subject Classification.} 65N06, 35R11

\noindent
\textbf{Key Words.} Fractional Laplacian, finite difference, arbitrary domain, mesh adaptation, overlay grid

\newpage

\section{Introduction}
The fractional Laplacian is a fundamental non-local operator for modeling anomalous dynamics and its numerical
approximation has attracted considerable attention recently; e.g. see \cite{Antil-2022,Huang2016,Lischke-2020} and references therein.
A number of numerical methods have been developed along the lines of various representations of
the fractional Laplacian, such as the Fourier/spectral representation, the singular
integral (including Riemann-Liouville and Caputo) representation, the Gr\"{u}nwald-Letnikov representation,
and the heat semi-group representation.
For examples, methods based on the Fourier/spectral representation include
finite difference (FD) methods \cite{Hao2021,Huang2014,Huang2016,Llic-2005,
Ortigueira2006,Ortigueira2008,YangQianqian2011},
spectral element method \cite{Song2017}, and sinc-based method \cite{Antil-2021}.
Methods based on the singular integral representation include
FD methods \cite{Duo-2018,Ying-2020,Sunjing2021},
finite element methods \cite{Acosta2017,Acosta201701,Ainsworth-2017,Ainsworth-2018,Bonito2019,Faustmann2022,Tian2013},
discontinuous Galerkin methods \cite{Du2019,Du2020},
and spectral method \cite{LiHuiyuan2022};
and FD methods \cite{DuNing2019,Pang-2012,Wang2012}
based on the Gr\"{u}nwald-Letnikov representation.
Loosely speaking, most of the existing FD methods have been constructed on uniform grids, have the advantage
of efficient matrix-vector multiplication via the fast Fourier transform (FFT), but do not work for domains with complex geometries
and have difficulty to incorporate with mesh adaptation.
On the other hand, finite element methods can work for arbitrary bounded domains and are easy to combine with mesh adaptation
but suffer from slowness of matrix-vector multiplication because the stiffness matrix is a full matrix.
A sparse approximation to the stiffness matrix and an efficient multigrid implementation
have been proposed by Ainsworth and Glusa \cite{Ainsworth-2018}.
There exists special effort to apply FD and spectral methods to domains with complex geometries.
For example, Song et al. \cite{Song2017} construct an approximation of the fractional Laplacian
based on the spectral decomposition and the spectral element approximation
of the Laplacian operator on arbitrary domains.
Hao et al. \cite{Hao2021} combine a uniform-grid FD method with the penalty method of \cite{Saito-2015} for non-rectangular domains.

\begin{figure}[ht!]
\centering
\includegraphics[width=0.22\textwidth]{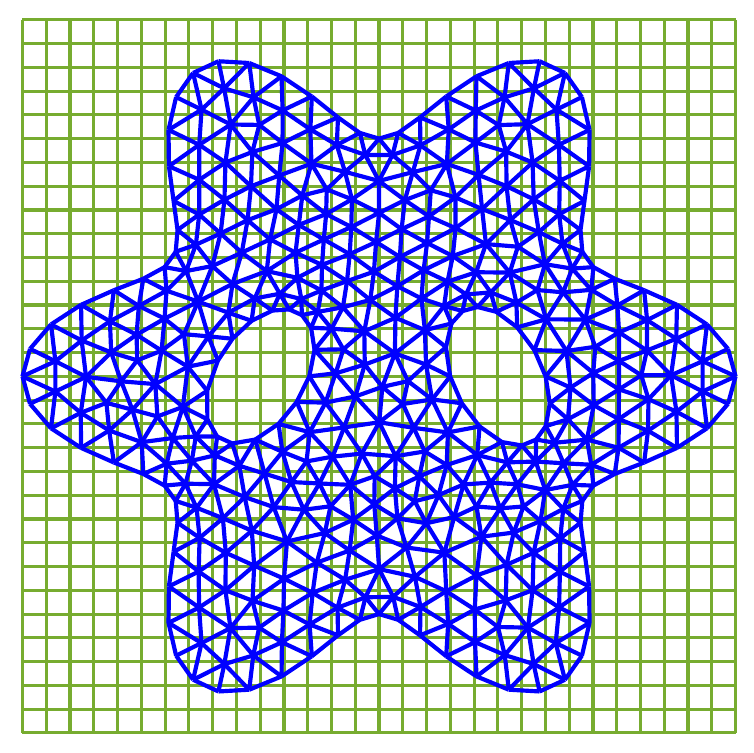}
\caption{A sketch of an unstructured simplicial mesh $\mathcal{T}_h$ (in color blue) overlaid by a uniform grid $\mathcal{T}_{\text{FD}}$
(in color green) .  Boundary value problem \eqref{FL-1} is solved on $\mathcal{T}_h$ that is not necessarily quasi-uniform.}
\label{fig:gridoverlay-1}
\end{figure}

The objective of this work is to present a simple FD method, called the grid-overlay FD method or GoFD,
for the fractional Laplacian on arbitrary bounded domains. The method
has the full advantage of uniform-grid FD methods in efficient matrix-vector multiplication via FFT.
Specifically, we consider the homogeneous Dirichlet problem
\begin{equation}
\label{FL-1}
\begin{cases}
(-\Delta)^{s} u  = f, & \text{ in } \Omega
\\
u = 0, & \text{ in } \Omega^c
\end{cases}
\end{equation}
where $(-\Delta)^{s}$ is the fractional Laplacian with the fractional order $s \in (0,1)$, $\Omega$ is a bounded
domain in $\mathbb{R}^d$ ($d \ge 1$), $\Omega^c \equiv \mathbb{R}^d\setminus \Omega$, and $f$ is a given function.
Given an unstructured simplicial mesh $\mathcal{T}_h$ that fits or approximately fits $\Omega$  and is not necessarily quasi-uniform,
we want to approximate the solution of (\ref{FL-1}) on $\mathcal{T}_h$. A uniform grid
$\mathcal{T}_{\text{FD}}$ (with spacing $h_{\text{\text{FD}}}$) that overlays $\bar{\Omega}$ (see Fig.~\ref{fig:gridoverlay-1}) is first created
and a uniform-grid FD approximation $h_{\text{\text{FD}}}^{-2 s} A_{\text{FD}}$ for the fractional Laplacian is constructed thereon.
Then, the GoFD approximation of the fractional Laplacian on $\mathcal{T}_h$ is defined as
$h_{\text{\text{FD}}}^{-2 s} A_{h}$, where $A_h$ is given by
\begin{equation}
\label{Ah-1}
A_h = D_h^{-1} (I_{h}^{\text{FD}})^T A_{\text{FD}} I_{h}^{\text{FD}}.
\end{equation}
Here, $I_{h}^{\text{FD}}$ is a transfer matrix from $\mathcal{T}_h$ to $\mathcal{T}_{\text{FD}}$ and $D_h$ is the diagonal matrix
formed by the column sums of $I_{h}^{\text{FD}}$.
Notice that the multiplication of $A_h$ with vectors can be performed efficiently
since the multiplication of $A_{\text{FD}}$ with vectors can be carried
out using FFT and $I_{h}^{\text{FD}}$ is sparse.
Moreover, $A_h$ is invertible if $I_{h}^{\text{FD}}$ has full column rank and positive column sums (cf. Theorem~\ref{thm:Ah-1}).
For a special choice of $I_{h}^{\text{FD}}$, piecewise linear interpolation from $\mathcal{T}_h$ to $\mathcal{T}_{\text{FD}}$, Theorem~\ref{thm:IhFD} states that the full column rank and positive column sums of $I_{h}^{\text{FD}}$
are guaranteed if $h_{\text{\text{FD}}}$ is smaller than or equal to a positive bound
proportional to the minimum element height of $\mathcal{T}_h$ (cf. (\ref{hFD-1})).
Stability and sparse preconditioning for the resulting linear system are studied.
Furthermore, the use of unstructured meshes in GoFD allows easy incorporation with existing
mesh adaptation strategies. As an example, the incorporation with
the so-called MMPDE moving mesh method \cite{HK2015,HRR94a,HR11} is discussed.
Numerical examples in 1D, 2D, and 3D are presented to demonstrate that GoFD has similar convergence behavior
as existing FD and finite element methods and that the sparse preconditioning and mesh adaptation are effective.

An outline of the paper is as follows. The construction of uniform-grid FD approximation for the fractional Laplacian is presented in
Section~\ref{SEC:uniformFD}. Section~\ref{SEC:GoFD} is devoted to the description of GoFD, studies of its properties,
and construction of sparse preconditioning. The MMPDE moving mesh method and its combination with GoFD are discussed in
Section~\ref{SEC:MMPDE}. Numerical examples are presented in Section~\ref{SEC:numerics}. Finally,
conclusions are drawn and further comments are given in Section~\ref{SEC:conclusions}.

\section{Uniform-grid FD approximation of the fractional Laplacian}
\label{SEC:uniformFD}

In this section we briefly describe the FD approximation of the fractional Laplacian on a uniform grid through
the Fourier transform. The reader is referred to, e.g., \cite{Hao2021,Huang2016,Ortigueira2006,Ortigueira2008}, for detail.
The properties of the approximation and the computation of the stiffness matrix
and its multiplication with vectors through the fast Fourier transform (FFT) are also discussed.
For notational simplicity and without loss of generality, we restrict our discussion in 2D.

\subsection{FD approximation on a uniform grid}

Consider an absolutely integrable function $u$ in $\mathbb{R}^2$. Recall that its Fourier transform
is defined as
\begin{equation}
\label{FT-1}
\hat{u}(\xi, \eta) = \int_{-\infty}^{\infty} \int_{-\infty}^{\infty}  u(x,y) e^{-i x \xi} e^{-i y \eta} d x d y ,
\end{equation}
and the inverse Fourier transform is given by
\begin{equation}
\label{FT-2}
u(x, y) = \frac{1}{(2\pi)^2} \int_{-\infty}^{\infty} \int_{-\infty}^{\infty} \hat{u}(\xi,\eta) e^{i x \xi} e^{i y \eta} d \xi d \eta .
\end{equation}
Applying the Laplacian operator to the above equation, we have
\[
(-\Delta) u(x, y) = \frac{1}{(2\pi)^2} \int_{-\infty}^{\infty} \int_{-\infty}^{\infty} (\xi^2+\eta^2)
\hat{u}(\xi,\eta) e^{i x \xi} e^{i y \eta} d \xi d \eta .
\]
This implies
\[
\widehat{(-\Delta)u}(\xi, \eta) = (\xi^2+\eta^2) \hat{u}(\xi,\eta) .
\]
Based on this, the Fourier transform of the fractional Laplacian can be defined as
\begin{align}
\label{FL-3}
\widehat{(-\Delta)^s u}(\xi,\eta) = (\xi^2+\eta^2)^{s} \hat{u}(\xi,\eta) .
\end{align}
Accordingly, the fractional Laplacian is given by
\begin{align}
\label{FL-2}
(-\Delta)^{s} u (x,y) = \frac{1}{(2\pi)^2} \int_{-\infty}^{\infty} \int_{-\infty}^{\infty}  \widehat{(-\Delta)^s u}(\xi,\eta)
e^{i x \xi} e^{i y \eta} d \xi d \eta .
\end{align}

A uniform-grid FD approximation for the fractional Laplacian can be defined in a similar manner. Consider an infinite uniform grid (lattice)
\[
(x_j, y_k) = (j h_{\text{\text{FD}}}, k h_{\text{\text{FD}}}),\; j, k \in \mathbb{Z},
\]
where $h_{\text{\text{FD}}}$ is a given positive number.
The discrete Fourier transform (DFT) on this grid is defined as
\begin{align}
\label{FT-3}
\check{u}(\xi,\eta) = \sum_{j=-\infty}^{\infty} \sum_{k=-\infty}^{\infty} u_{j,k} e^{-i x_j \xi} e^{-i y_k \eta} ,
\end{align}
where $u_{j,k} = u(x_j,y_k)$. Here, we use $\check{u}$ to denote the DFT of $u$ to avoid confusion with
the continuous Fourier transform (cf. (\ref{FT-1})). The inverse DFT is given by
\begin{align}
u(x_j,y_k) & = \frac{h_{\text{\text{FD}}}^2}{(2\pi)^2}
\int_{-\frac{\pi}{h_{\text{\text{FD}}}}}^{\frac{\pi}{h_{\text{\text{FD}}}}} \int_{-\frac{\pi}{h_{\text{\text{FD}}}}}^{\frac{\pi}{h_{\text{\text{FD}}}}}
\check{u}(\xi,\eta) e^{i x_j \xi} e^{i y_k \eta} d \xi d \eta
\notag \\
& = \frac{1}{(2\pi)^2} \int_{-\pi}^{\pi} \int_{-\pi}^{\pi}
\check{u}(\frac{\xi}{h_{\text{\text{FD}}}},\frac{\eta}{h_{\text{\text{FD}}}}) e^{i j \xi} e^{i k \eta} d \xi d \eta .
\label{FT-4}
\end{align}
Now we consider a central FD approximation to the Laplacian on $\{ (x_j,y_k)\}$,
\begin{equation}
\label{5-point}
(-\Delta_h) u (x_j, y_k) = \frac{1}{h_{\text{\text{FD}}}^2} (u_{j+1,k} - 2 u_{j,k} + u_{j-1,k})
+ \frac{1}{h_{\text{\text{FD}}}^2} (u_{j,k+1} - 2 u_{j,k} + u_{j,k-1}) .
\end{equation}
Applying the DFT (\ref{FT-3}) to the above equation, we get
\begin{align*}
\widecheck{(-\Delta_h)u} (\xi, \eta) & = \frac{1}{h_{\text{\text{FD}}}^2} \left (4 \sin^{2}(\frac{\xi h_{\text{\text{FD}}}}{2}) + 4 \sin^{2}(\frac{\eta h_{\text{\text{FD}}}}{2})\right )
\check{u}(\xi,\eta) .
\end{align*}
From this, the DFT of the FD approximation of the fractional Laplacian can be defined as
\begin{align}
\label{FL-4}
\widecheck{(-\Delta_h)^s u} (\xi, \eta) & = \frac{1}{h_{\text{\text{FD}}}^{2s}}
\left (4 \sin^{2}(\frac{\xi h_{\text{\text{FD}}}}{2}) + 4 \sin^{2}(\frac{\eta h_{\text{\text{FD}}}}{2})\right )^{s} \check{u}(\xi,\eta) .
\end{align}
Then, the FD approximation of the fractional Laplacian reads as
\begin{align}
(-\Delta_h)^s u(x_j,y_k) & = \frac{1}{ (2\pi)^2}
\int_{-\pi}^{\pi} \int_{-\pi}^{\pi}
\widecheck{(-\Delta_h)^s u} (\frac{\xi}{h_{\text{\text{FD}}}},\frac{\eta}{h_{\text{\text{FD}}}}) e^{i j \xi} e^{i k \eta} d \xi d\eta
\notag \\
& = \frac{1}{(2\pi)^2 h_{\text{\text{FD}}}^{2s}} \sum_{m=-\infty}^{\infty} \sum_{n=-\infty}^{\infty} u_{m,n} \int_{-\pi}^{\pi} \int_{-\pi}^{\pi}  \psi(\xi,\eta) e^{i (j-m) \xi} e^{i (k-n) \eta} d \xi d\eta ,
\label{FL-5}
\end{align}
where
\begin{align}
\label{FL-6}
\psi(\xi,\eta) = \left (4 \sin^{2}(\frac{\xi}{2}) + 4 \sin^{2}(\frac{\eta}{2})\right )^{s} .
\end{align}
If we define
\begin{align}
\label{T-1}
& T_{p,q} = \frac{1}{(2\pi)^2} \int_{-\pi}^{\pi} \int_{-\pi}^{\pi}  \psi(\xi,\eta) e^{i p \xi} e^{i q \eta} d \xi d\eta,
\\
\label{A-1}
& A_{(j,k),(m,n)}  = T_{j-m,k-n} ,
\end{align}
we can rewrite (\ref{FL-5}) into
\begin{align}
\label{FL-8}
(-\Delta_h)^s u(x_j,y_k) = \frac{1}{h_{\text{\text{FD}}}^{2s}}
\sum_{m=-\infty}^{\infty} \sum_{n=-\infty}^{\infty} A_{(j,k),(m,n)} u_{m,n},\qquad -\infty < j, k < \infty .
\end{align}
Notice that $A_{(j,k),(m,n)}$ is the entry of an infinite matrix $A$ at the $(j,k)$-th row and the $(m,n)$-th column
with the understanding that the 2D indices $(j,k)$ and $(m,n)$ are converted into linear indices
using a certain ordering (such as the natural ordering).
Moreover,  $T_{p,q}$'s are the coefficients of the Fourier series of $\psi(\xi,\eta)$. From (\ref{T-1}), it is not difficult
to show
\begin{equation}
\label{T-2}
T_{-p,-q} = T_{p,q}, \quad T_{-p,q} = T_{p,q}, \quad T_{p,-q} = T_{p,q} .
\end{equation}
Furthermore, (\ref{A-1}) implies that $A$ is a Toeplitz matrix of infinite order in 1D and a block Toeplitz matrix of Toeplitz blocks
in multi-dimensions.

\begin{lem}[Parseval's equality]
\label{lem:parseval}
If $\sum_{j=-\infty}^{\infty} \sum_{k=-\infty}^{\infty} u_{j,k}^2 < \infty$, then
\begin{align}
\label{lem:parseval-1}
\frac{1}{ (2\pi)^2} \int_{-\pi}^{\pi} \int_{-\pi}^{\pi} |\check{u}(\frac{\xi}{h_{\text{\text{FD}}}}, \frac{\eta}{h_{\text{\text{FD}}}}) |^2 d \xi d\eta
= \sum_{j=-\infty}^{\infty} \sum_{k=-\infty}^{\infty} u_{j,k}^2 .
\end{align}
\end{lem}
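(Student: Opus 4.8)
The plan is to recognize (\ref{lem:parseval-1}) as the classical Parseval identity for double Fourier series. Introduce the $2\pi$-periodic function
$$g(\xi,\eta) \;=\; \check u\!\left(\tfrac{\xi}{h_{\text{FD}}},\tfrac{\eta}{h_{\text{FD}}}\right) \;=\; \sum_{j=-\infty}^{\infty}\sum_{k=-\infty}^{\infty} u_{j,k}\, e^{-i j \xi}\, e^{-i k \eta},$$
which follows from (\ref{FT-3}) together with $x_j = j h_{\text{FD}}$ and $y_k = k h_{\text{FD}}$. Thus the $u_{j,k}$ are, up to the sign convention in the exponent, the Fourier coefficients of $g$ on $[-\pi,\pi]^2$, and the claim is exactly that $\frac{1}{(2\pi)^2}\|g\|_{L^2([-\pi,\pi]^2)}^2 = \sum_{j,k} u_{j,k}^2$. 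Since the $u_{j,k}$ are real, $u_{j,k}^2 = |u_{j,k}|^2$, so the real versus complex distinction is immaterial.

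First I would record the orthogonality of the exponentials,
$$\frac{1}{(2\pi)^2}\int_{-\pi}^{\pi}\int_{-\pi}^{\pi} e^{i(j-m)\xi}\,e^{i(k-n)\eta}\,d\xi\,d\eta = \delta_{j,m}\,\delta_{k,n},$$
which is the same computation already used to define the coefficients $T_{p,q}$ in (\ref{T-1}). To avoid convergence subtleties I would not expand the square of the full series directly; instead I would work with the truncated partial sums $g_N = \sum_{|j|,|k|\le N} u_{j,k}\, e^{-i j \xi}\, e^{-i k \eta}$. For each finite $N$ the double sum is finite, so expanding $|g_N|^2$ and integrating term by term is exact and, by the orthogonality above, yields $\frac{1}{(2\pi)^2}\int_{-\pi}^{\pi}\int_{-\pi}^{\pi} |g_N|^2 = \sum_{|j|,|k|\le N} u_{j,k}^2$.

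It then remains to pass to the limit $N\to\infty$. The right-hand side converges to $\sum_{j,k} u_{j,k}^2$ by the hypothesis $\sum_{j,k} u_{j,k}^2 < \infty$. For the left-hand side I would show that $\{g_N\}$ is Cauchy in $L^2([-\pi,\pi]^2)$: for $N'>N$ the same orthogonality gives $\frac{1}{(2\pi)^2}\|g_{N'}-g_N\|_{L^2}^2 = \sum_{N<\max(|j|,|k|)\le N'} u_{j,k}^2$, which tends to $0$ by square-summability. By completeness of $L^2$ the sequence $g_N$ converges in $L^2$, its limit is $g$ (the $L^2$ sum defining $g$), and hence $\|g_N\|_{L^2}\to\|g\|_{L^2}$. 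Combining the two limits gives (\ref{lem:parseval-1}).

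The main obstacle is purely the convergence bookkeeping: the hypothesis is only square-summability, so the series defining $\check u$ need not converge absolutely or pointwise, and one cannot interchange the full sum with the integral naively. The partial-sum argument sidesteps this, reducing everything to a finite exact computation plus an $\ell^2$ Cauchy estimate; equivalently, one may simply invoke that the coefficient-to-function map is an isometry (Plancherel) from $\ell^2$ onto $L^2([-\pi,\pi]^2)$ up to the factor $(2\pi)^{-2}$.
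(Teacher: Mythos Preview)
Your proof is correct and follows essentially the same route as the paper, which simply notes that (\ref{lem:parseval-1}) ``can be proved using (\ref{FT-4}) and direct calculation''---i.e., recognizing the $u_{j,k}$ as Fourier coefficients of the $2\pi$-periodic function $\check u(\xi/h_{\text{FD}},\eta/h_{\text{FD}})$ and invoking orthogonality of the exponentials. Your treatment is in fact more careful than the paper's one-line sketch, since you handle the convergence issue explicitly via partial sums and the $\ell^2$--$L^2$ isometry rather than formally expanding an infinite double sum.
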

\begin{proof}
(\ref{lem:parseval-1}) can be proved using (\ref{FT-4}) and direct calculation.
\end{proof}

Next, we consider functions with compact support in $\Omega$ for solving the homogeneous Dirichlet problem (\ref{FL-1}).
More specifically, we consider a square $(-R,R) \times (-R,R)$, where $R$ is a positive number such that
$\Omega$ is covered by the square (cf. Fig.~\ref{fig:gridoverlay-1}).
For a given positive integer $N$, we choose $h_{\text{\text{FD}}} = \frac{R}{N}$.
Let $\mathcal{T}_{\text{FD}} = \{ (x_j, y_k), \; -N \le j, k \le N\}$ (a finite unifrom grid) and
$\vec{u}_{\text{FD}} = \{ u_{j,k}, \; j, k = -N, ..., N\}$.
Notice that the right-hand side of (\ref{lem:parseval-1}) becomes a finite double sum for the current situation
since $u_{j,k} = 0$ for any $(x_j,y_k) \notin (-R,R) \times (-R,R)$.

For notational convenience, we denote the restriction of the infinite matrix $A$ on $\mathcal{T}_{\text{FD}}$
by $A_{\text{FD}}$, i.e.,
\begin{equation}
\label{AFD-1}
A_{\text{FD}} = \left ( \frac{}{} A_{(j,k),(m,n)} \right )_{(2N+1)^2\times (2N+1)^2}.
\end{equation}
Notice that $h_{\text{\text{FD}}}^{-2s} A_{\text{FD}}$ is an FD approximation matrix of the fractional Laplacian on $\mathcal{T}_{\text{FD}}$.
Recall from (\ref{A-1}) (now with $-N \le j,k,m,n \le N$) that $A_{\text{FD}}$ is a block Toeplitz matrix of Toeplitz blocks.

\begin{lem}[The fractional Poincar\'{e} inequality]
\label{lem:poincare}
For any function $u$ with support in $(-R,R) \times (-R,R)$, there holds
\begin{align}
\label{lem:poincare:1}
& \int_{-\pi}^{\pi} \int_{-\pi}^{\pi} \big (|\xi|^2 + |\eta|^2\big )^{s} |\check{u}(\frac{\xi}{h_{\text{\text{FD}}}}, \frac{\eta}{h_{\text{\text{FD}}}})|^2 d \xi d\eta
\\
& \qquad \qquad \ge C\, h_{\text{\text{FD}}}^{2s} \int_{-\pi}^{\pi} \int_{-\pi}^{\pi} |\check{u}(\frac{\xi}{h_{\text{\text{FD}}}}, \frac{\eta}{h_{\text{\text{FD}}}})|^2 d \xi d\eta  ,
\notag
\end{align}
where $C$ is a positive constant independent of $u$, $h_{\text{\text{FD}}}$, and $N$.
\end{lem}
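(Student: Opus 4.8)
The plan is to strip away the $h_{\text{FD}}$-dependence by rescaling the frequency variables and then to recognize the resulting inequality as a discrete counterpart of the classical compact-support fractional Poincar\'e inequality on a single period. First I would substitute $\zeta = \xi/h_{\text{FD}}$, $\omega = \eta/h_{\text{FD}}$. Since $(\xi^2+\eta^2)^s = h_{\text{FD}}^{2s}(\zeta^2+\omega^2)^s$ and $d\xi\,d\eta = h_{\text{FD}}^2\,d\zeta\,d\omega$, the left-hand side of (\ref{lem:poincare:1}) equals $h_{\text{FD}}^{2s+2}\int_B (\zeta^2+\omega^2)^s|\check u(\zeta,\omega)|^2\,d\zeta\,d\omega$, while the target lower bound $C\,h_{\text{FD}}^{2s}\times(\text{right-hand integral})$ equals $C\,h_{\text{FD}}^{2s+2}\int_B|\check u(\zeta,\omega)|^2\,d\zeta\,d\omega$, where $B = [-\pi/h_{\text{FD}},\,\pi/h_{\text{FD}}]^2$. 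Cancelling the common factor $h_{\text{FD}}^{2s+2}$ reduces the claim to
\[
\int_{B}(\zeta^2+\omega^2)^s\,|\check u(\zeta,\omega)|^2\,d\zeta\,d\omega \;\ge\; C\int_{B}|\check u(\zeta,\omega)|^2\,d\zeta\,d\omega ,
\]
in which $\check u$ is the finite trigonometric sum (\ref{FT-3}) and $B$ is exactly one period of $\check u$. I would aim for a constant proportional to $R^{-2s}$, which is manifestly independent of $h_{\text{FD}}$ and $N$.

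Second, since the weight $(\zeta^2+\omega^2)^s$ is bounded below by $\delta^{2s}$ outside the disk $B_\delta = \{\zeta^2+\omega^2<\delta^2\}$, I would split $B$ into $B_\delta$ and $B\setminus B_\delta$ and use nonnegativity of the weight to write
\[
\int_B(\zeta^2+\omega^2)^s|\check u|^2 \;\ge\; \delta^{2s}\int_{B\setminus B_\delta}|\check u|^2 \;=\; \delta^{2s}\Big(\int_B|\check u|^2 - \int_{B_\delta}|\check u|^2\Big).
\]
The entire argument then reduces to choosing a radius $\delta\sim 1/R$ (small enough that $B_\delta\subset B$, which holds for all $N\ge1$) for which the low-frequency mass obeys $\int_{B_\delta}|\check u|^2\le\frac12\int_B|\check u|^2$; the inequality follows with $C=\tfrac12\delta^{2s}$.

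Third, to control the low-frequency mass I would bound $\check u$ uniformly on $B$. Pointwise, the finite sum (\ref{FT-3}) gives $|\check u(\zeta,\omega)|\le\sum_{j,k}|u_{j,k}|=\|u\|_{\ell^1}\le\sqrt{M}\,\|u\|_{\ell^2}$, where $M$ is the number of grid points in the support of $u$; and by orthogonality of $\{e^{-i(jh_{\text{FD}}\zeta+kh_{\text{FD}}\omega)}\}$ over the period $B$ one has $\int_B|\check u|^2 = (2\pi/h_{\text{FD}})^2\|u\|_{\ell^2}^2$. Combining these gives $\sup_B|\check u|^2\le \tfrac{M h_{\text{FD}}^2}{4\pi^2}\int_B|\check u|^2$, hence $\int_{B_\delta}|\check u|^2\le|B_\delta|\,\sup_B|\check u|^2=\tfrac{\delta^2 M h_{\text{FD}}^2}{4\pi}\int_B|\check u|^2$. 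Because $u$ is supported in $(-R,R)^2$, its support meets at most $(2N-1)^2$ grid points, so $M h_{\text{FD}}^2\le(2N-1)^2 h_{\text{FD}}^2<4R^2$; taking $\delta=c_0/R$ with an absolute constant $c_0$ then forces the prefactor below $\tfrac12$ and yields $C=\tfrac12(c_0/R)^{2s}$.

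The step I expect to be the crux is the uniform bound in the third paragraph, and in particular verifying that its constant is free of $h_{\text{FD}}$ and $N$. The naive pointwise estimate introduces the factor $M\sim N^2$, which looks fatal, but it cancels exactly against the $h_{\text{FD}}^2$ produced by the Parseval identity, leaving only the support area $M h_{\text{FD}}^2\lesssim R^2$. This cancellation is the discrete shadow of the continuous bound $\|\phi\|_{L^1}\le|\mathrm{supp}\,\phi|^{1/2}\|\phi\|_{L^2}$ that underlies the classical fractional Poincar\'e inequality for compactly supported functions, and tracking it carefully is precisely what guarantees the $h_{\text{FD}}$- and $N$-independence of $C$.
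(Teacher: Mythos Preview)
Your proof is correct and essentially carries out what the paper only sketches. The paper's proof consists of a single sentence: it refers to the continuous fractional Poincar\'e inequality for compactly supported functions (Bahouri--Chemin--Danchin, Proposition~1.55) and says the discrete version follows by mimicking that argument with the discrete Parseval identity (Lemma~\ref{lem:parseval}). Your low/high-frequency splitting together with the pointwise bound $|\check u|\le\|u\|_{\ell^1}\le\sqrt{M}\,\|u\|_{\ell^2}$ and the Parseval relation $\int_B|\check u|^2=(2\pi/h_{\text{FD}})^2\|u\|_{\ell^2}^2$ is precisely the standard continuous argument transplanted to the lattice, and the cancellation $M\,h_{\text{FD}}^2\lesssim R^2$ you flag as the crux is exactly the discrete analogue of $\|\phi\|_{L^1}\le|\mathrm{supp}\,\phi|^{1/2}\|\phi\|_{L^2}$ that makes the continuous proof work.
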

\begin{proof}
The proof of the continuous fractional Poincar\'{e} inequality can be found, for example, in \cite[Proposition 1.55, p. 39]{Bahouri-2011}.
(\ref{lem:poincare:1}) is different from the continuous one since it uses DFT instead of the continuous
Fourier transform. Nevertheless, it can be proved by following the proof of the continuous version and using Lemma~\ref{lem:parseval}.
\end{proof}

We remark that the left-hand and right-hand sides of (\ref{lem:poincare:1}) can be viewed as the $H^{2s}$ semi-norm
and $L^2$ norm of $u$, respectively.

\begin{pro}
\label{pro:AFD}
The matrix $A_{\text{FD}} =  \big (A_{(j,k),(m,n)}\big )$ is symmetric and positive definite. Particularly,
\begin{equation}
\vec{u}_{\text{FD}}^T A_{\text{FD}} \vec{u}_{\text{FD}} \ge C\, h_{\text{\text{FD}}}^{2s} \; \vec{u}_{\text{FD}}^T \vec{u}_{\text{FD}},
\quad \forall \vec{u}_{\text{FD}} \in \mathbb{R}^{(2N+1)^2}
\label{pro:AFD-1}
\end{equation}
where $C$ is a positive constant.
\end{pro}

\begin{proof}
From (\ref{A-1}) and (\ref{T-2}), we can see that $A_{\text{FD}}$ is symmetric. Moreover,  from (\ref{T-1}) and (\ref{A-1}) we have
\begin{align*}
&\vec{u}_{\text{FD}}^T A_{\text{FD}} \vec{u}_{\text{FD}}
 = \sum_{j=-N}^N \sum_{k=-N}^N \sum_{m=-N}^N \sum_{n=-N}^N A_{(j,k),(m,n)} u_{j,k} u_{m,n}
\\
 =& \frac{1}{(2\pi)^2} \int_{-\pi}^{\pi} \int_{-\pi}^{\pi} \psi(\xi,\eta)
\cdot \sum_{j=-N}^N \sum_{k=-N}^N u_{j,k} e^{i j \xi+i k \eta}
\cdot \sum_{m=-N}^N \sum_{n=-N}^N u_{m,n} e^{-i m \xi-i n \eta} d \xi d\eta
\\
 =& \frac{1}{(2\pi)^2} \int_{-\pi}^{\pi} \int_{-\pi}^{\pi}  \psi(\xi,\eta)
|\check{u}(\frac{\xi}{h_{\text{\text{FD}}}}, \frac{\eta}{h_{\text{\text{FD}}}})|^2 d \xi d \eta .
\end{align*}
Since
\[
\psi(\xi, \eta) = \left (4 \sin^{2}(\frac{\xi}{2}) + 4 \sin^{2}(\frac{\eta}{2})\right )^{s}
\ge \left (\frac{2}{\pi}\right )^2 \big (|\xi|^2 + |\eta|^2\big )^{s},\quad \forall \xi, \eta \in (-\pi, \pi)
\]
we get
\[
\vec{u}_{\text{FD}}^T A_{\text{FD}} \vec{u}_{\text{FD}} \ge \frac{1}{(2\pi)^2} \left (\frac{2}{\pi}\right )^2 \int_{-\pi}^{\pi} \int_{-\pi}^{\pi}
\big (|\xi|^2 + |\eta|^2\big )^{s}
|\check{u}(\frac{\xi}{h_{\text{\text{FD}}}}, \frac{\eta}{h_{\text{\text{FD}}}})|^2 d \xi d \eta .
\]
Combining this with Lemmas~\ref{lem:parseval} and \ref{lem:poincare} we obtain (\ref{pro:AFD-1}), which implies that $A_{\text{FD}}$
is positive definite.
\end{proof}

When $\Omega = (-R, R)\times (-R, R)$, an FD approximation of the homogeneous Dirichlet problem (\ref{FL-1}) on $\mathcal{T}_{\text{FD}}$
is given by
\begin{equation}
\frac{1}{h_{\text{\text{FD}}}^{2s}} A_{\text{FD}} \vec{u}_{\text{FD}} = \vec{f}_{\text{FD}},
\label{FD-1}
\end{equation}
where $\vec{f}_{\text{FD}} = \{ f(x_j, y_k),\; j, k = -N, ..., N\}$.

\begin{pro}[Stability]
\label{pro:FD-stab}
The solution of (\ref{FD-1}) satisfies
\begin{align}
\label{pro:FD-stab:1}
\vec{u}_{\text{FD}}^T A_{\text{FD}} \vec{u}_{\text{FD}} & \le C_1\, h_{\text{\text{FD}}}^{2s} \, \vec{f}_{\text{FD}}^T \vec{f}_{\text{FD}} ,
\\
\label{pro:FD-stab:2}
\vec{u}_{\text{FD}}^T \vec{u}_{\text{FD}} & \le C_2\, \vec{f}_{\text{FD}}^T \vec{f}_{\text{FD}} ,
\end{align}
where $C_1$ and $C_2$ are constants.
\end{pro}

\begin{proof}
Multiplying $\vec{u}_{\text{FD}}^T$ with (\ref{FD-1}) from left and using the Cauchy-Schwarz inequality, we have
\begin{align*}
\vec{u}_{\text{FD}}^T A_{\text{FD}} \vec{u}_{\text{FD}} & = h_{\text{\text{FD}}}^{2s} \vec{u}_{\text{FD}}^T \vec{f}_{\text{FD}}
= h_{\text{\text{FD}}}^{2s} ( A_{\text{FD}}^{\frac{1}{2}} \vec{u}_{\text{FD}})^T ( A_{\text{FD}}^{-\frac{1}{2}} \vec{f}_{\text{FD}} )
\nonumber\\
& \le h_{\text{\text{FD}}}^{2s} ( \vec{u}_{\text{FD}}^T A_{\text{FD}} \vec{u}_{\text{FD}} )^{\frac 12}
( \vec{f}_{\text{FD}}^T A_{\text{FD}}^{-1} \vec{f}_{\text{FD}} )^{\frac 12} .
\end{align*}
Thus, we have
\[
\vec{u}_{\text{FD}}^T A_{\text{FD}} \vec{u}_{\text{FD}} \le h_{\text{\text{FD}}}^{4s} \vec{f}_{\text{FD}}^T A_{\text{FD}}^{-1} \vec{f}_{\text{FD}} .
\]
Notice that Proposition~\ref{pro:AFD} implies $\| A_{\text{FD}}^{-1}  \| \le C h_{\text{\text{FD}}}^{-2 s}$.
Combining these results gives (\ref{pro:FD-stab:1}).

The inequality (\ref{pro:FD-stab:2}) follows from (\ref{pro:FD-stab:1}) and Proposition~\ref{pro:AFD}.
\end{proof}

\begin{rem}
\label{rem:FD-error}
Error estimates and convergence order for the FD approximation of the fractional Laplacian
have been established for sufficiently smooth solutions by a number of researchers;
e.g., see \cite{Duo-2018,Hao2021,Huang2016}.
Unfortunately, the existing analysis does not apply to solutions of (\ref{FL-1}) with the optimal regularity
$H^{s + 1/2-\epsilon}(\Omega)$ for any $\epsilon >0$ \cite{Borthagaray-2018}.
Nevertheless, for those solutions it has been observed numerically (e.g., see \cite{Duo-2018,Hao2021})
that the FD approximation converges at $\mathcal{O}(h_{\text{\text{FD}}}^{s})$ in $L^\infty$ norm.
This is confirmed by our numerical results (cf. Example~\ref{ex:5}).
Our results also suggest that the error is $\mathcal{O}(h_{\text{\text{FD}}}^{\min (1,s + 1/2)})$ in $L^2$ norm.
\qed
\end{rem}

\subsection{Computation of the multiplication of $A_{\text{FD}}$ with vectors using FFT}
\label{SEC:AU-FFT}

For the moment, we assume that $T_{p,q}$'s have been computed.
The process of computing  the multiplication of $A_{\text{FD}}$ with vectors
starts with computing the DFT of $T_{p,q}$'s, i.e.,
\[
\check{T}_{m,n} = \sum_{p=-2N}^{2N-1} \sum_{q=-2N}^{2N-1} T_{p,q} e^{-\frac{i 2 \pi (m+2N)(p+2N)}{4N}}
e^{-\frac{i 2 \pi (n+2N)(q+2N)}{4N}},
\]
for $m,n = - 2N, ..., 2N-1$.
The inverse DFT is
\[
T_{p,q} = \frac{1}{(4N)^2} \sum_{m=-2N}^{2N-1} \sum_{n=-2N}^{2N-1} \check{T}_{m,n} e^{\frac{i 2 \pi (m+2N)(p+2N)}{4N}}
e^{\frac{i 2 \pi (n+2N)(q+2N)}{4N}},
\]
for $p,q = - 2N, ..., 2N-1$.
Then, from (\ref{A-1}) we have
\begin{align}
(A_{\text{FD}} \vec{u}_{\text{FD}})_{(j,k)}
 & = \sum_{m=-N}^N \sum_{n=-N}^N T_{j-m,k-n} u_{m,n}
\notag \\
 =& \sum_{m=-N}^N \sum_{n=-N}^N u_{m,n}
\frac{1}{(4N)^2} \sum_{p=-2N}^{2N-1} \sum_{q=-2N}^{2N-1} \check{T}_{p,q} e^{\frac{i 2 \pi (p+2N)(j-m+2N)}{4N}}
e^{\frac{i 2 \pi (q+2N)(k-n+2N)}{4N}}
\notag \\
 =& \frac{1}{(4N)^2} \sum_{p=-2N}^{2N-1} \sum_{q=-2N}^{2N-1} \check{T}_{p,q} (-1)^{p+2N} (-1)^{q+2N}
e^{\frac{i 2 \pi (p+2N)(j+N)}{4N}}
e^{\frac{i 2 \pi (q+2N)(k+N)}{4N}}
\notag \\
& \qquad \cdot \sum_{m=-N}^N \sum_{n=-N}^N u_{m,n}
  e^{-\frac{i 2 \pi (p+2N)(m+N)}{4N}}
e^{-\frac{i 2 \pi (q+2N)(n+N)}{4N}} .
\label{AU-1}
\end{align}
If we expand $u$ into
\[
\tilde{u}_{m,n} = \begin{cases} u_{m,n}, & \text{ for } -N \le m,n \le N \\ 0, & \text{ otherwise} \end{cases}
\]
we can rewrite the inner double sum in (\ref{AU-1}) into
\begin{align*}
&\sum_{m=-N}^N \sum_{n=-N}^N u_{m,n}
  e^{-\frac{i 2 \pi (p+2N)(m+N)}{4N}} e^{-\frac{i 2 \pi (q+2N)(n+N)}{4N}}
\\
& \qquad \qquad =   \sum_{m=-N}^{3N-1} \sum_{n=-N}^{3 N-1} \tilde{u}_{m,n}
e^{-\frac{i 2 \pi (p+2N)(m+N)}{4N}} e^{-\frac{i 2 \pi (q+2N)(n+N)}{4N}},
\end{align*}
which is the DFT of $\tilde{u}$ (denoted by $\check{\tilde{u}}$). Then,
\begin{align*}
(A_{\text{FD}}\vec{u}_{\text{FD}})_{(j,k)}
 = \frac{1}{(4N)^2} \sum_{p=-2N}^{2N-1} \sum_{q=-2N}^{2N-1} \check{T}_{p,q} \check{\tilde{u}}_{p,q}
(-1)^{p+2N} (-1)^{q+2N} e^{\frac{i 2 \pi (p+2N)(j+N)}{4N}} e^{\frac{i 2 \pi (q+2N)(k+N)}{4N}} .
\end{align*}
Thus, $A_{\text{FD}}\vec{u}_{\text{FD}}$ can be obtained as the inverse DFT of $\check{T}_{p,q} \check{\tilde{u}}_{p,q}
(-1)^{p+2N} (-1)^{q+2N}$.

In summary, $A_{\text{FD}}\vec{u}_{\text{FD}}$ can be carried out by performing three FFTs. From the complexity of FFT,
we can estimate the cost of computing $A_{\text{FD}}\vec{u}_{\text{FD}}$ (in $d$-dimensions) as
$
\mathcal{O}(N^d \log (N^d)) .
$
It is worth mentioning that the number of grid points of $\mathcal{T}_{\text{FD}}$ and length of vector $\vec{u}_{\text{FD}}$
are $\mathcal{O}(N^d)$. Thus, the cost is almost linear about the number of grid points.

\subsection{Computation of matrix $T$}
\label{SEC:T}

We rewrite (\ref{T-1}) into
\begin{align}
\label{T-0}
T_{p,q} = (-1)^{p+q} \int_{0}^{1} \int_{0}^{1}  \tilde{\psi}(\xi,\eta) e^{i 2 \pi p \xi} e^{i 2 \pi q \eta} d \xi d\eta,
\end{align}
where
\begin{equation}
\label{psi-2}
\tilde{\psi}(\xi,\eta) = \big ( 4 \cos^2(\pi \xi) + 4 \cos^2(\pi \eta) \big )^{s} .
\end{equation}
Recall from (\ref{T-2}) that we only need to compute $T_{p,q}$ for $ 0 \le p, q\le 2N$.
We first consider the composite trapezoidal rule (2rd-order). For any given integer $M \ge  2N+1$, let
\[
\xi_j = \frac{j}{M}, \quad \eta_k = \frac{k}{M}, \quad j, k = 0, 1, ..., M.
\]
Then,
\begin{align}
T_{p,q} & = (-1)^{p+q} \sum_{j=0}^{M-1} \sum_{k=0}^{M-1}
\int_{\xi_{j}}^{\xi_{j+1}}\int_{\eta_{k}}^{\eta_{k+1}}
  \tilde{\psi}(\xi,\eta) \cdot e^{i 2 \pi p \xi} e^{i 2 \pi q\eta}d\xi d \eta
\notag \\
& \approx \frac{(-1)^{p+q}}{M^2} \sum_{j=0}^{M-1} \sum_{k=0}^{M-1}
  \tilde{\psi}(\xi_{j},\eta_{k}) \cdot e^{\frac{i 2 \pi p j}{M}} e^{\frac{i 2 \pi q k}{M}} ,
  \quad 0 \le p, q \le 2 N.
\label{T-3}
\end{align}
Thus, $T_{p,q}$'s can be obtained with the inverse FFT.

It should be pointed out that $M$ should be chosen much larger than $2N+1$ in the above procedure for accurate computation of $T$
due to the highly oscillatory nature of the factor $e^{i 2 \pi p \xi} e^{i 2 \pi q \eta}$ in (\ref{T-0}).
As a result, the computation of $T$ can be very expensive in terms of CPU time and memory for large $N$.
To avoid this difficulty, we use Filon's approach \cite{Filon-1928} designed
for highly oscillatory integrals including (\ref{T-0}). We explain this in 1D,
\begin{align*}
T_{p} & = (-1)^{p} \int_{0}^{1}  \tilde{\psi}(\xi) e^{i 2 \pi p \xi}  d \xi
= (-1)^{p} \sum_{j = 0}^{M-1} \int_{\xi_{j}}^{\xi_{j+1}}  \tilde{\psi}(\xi) e^{i 2 \pi p \xi} d \xi .
\end{align*}
The key idea of Filon's approach is to approximate $\tilde{\psi}(\xi)$ with a polynomial on each subinterval and then carry out
the resulting integrals analytically. Here, we approximate $\tilde{\psi}(\xi)$ with a linear polynomial
on each subinterval. In this situation, we can first perform integration by parts and then do the approximation, i.e.,
\begin{align*}
T_{p} & = (-1)^{p} \sum_{j = 0}^{M-1} \left ( \frac{1}{i 2 \pi p} \tilde{\psi}(\xi) e^{i 2 \pi p \xi} |_{\xi_{j}}^{\xi_{j+1}}
- \frac{1}{i 2 \pi p} \int_{\xi_{j}}^{\xi_{j+1}} \tilde{\psi}'(\xi) e^{i 2 \pi p \xi} d \xi \right ).
\end{align*}
The sum of the first term in the bracket vanishes since $\tilde{\psi}(\xi) e^{i 2 \pi p \xi}$ is periodic.
Then,
\begin{align}
T_{p} & = \frac{(-1)^{p+1}}{i 2 \pi p} \sum_{j = 0}^{M-1} \int_{\xi_{j}}^{\xi_{j+1}} \tilde{\psi}'(\xi) e^{i 2 \pi p \xi} d \xi
\notag \\
&\approx \frac{(-1)^{p+1}}{i 2 \pi p} \sum_{j = 0}^{M-1} \int_{\xi_{j}}^{\xi_{j+1}}
\frac{\tilde{\psi}(\xi_{j+1})-\tilde{\psi}(\xi_{j})}{1/M} e^{i 2 \pi p \xi} d \xi
\notag \\
& = \frac{(-1)^{p+1}}{(i 2 \pi p)^2} \sum_{j = 0}^{M-1}  \frac{\tilde{\psi}(\xi_{j+1})-\tilde{\psi}(\xi_{j})}{1/M}
\left (e^{i 2 \pi p \xi_{j+1}} - e^{i 2 \pi p \xi_{j}}\right ) .
\notag
\end{align}
Thus,
\begin{align}
T_{p} & \approx
\frac{(-1)^{p+1}}{(2 \pi p)^2} \sum_{j = 0}^{M-1}  \frac{\tilde{\psi}(\xi_{j+1})-2 \tilde{\psi}(\xi_{j})+\tilde{\psi}(\xi_{j-1})}{1/M}
e^{\frac{i 2 \pi p j}{M}} , \quad p = 0, ..., 2N .
\label{Tp-1D-2}
\end{align}
Notice that (\ref{Tp-1D-2}) can be computed using
FFT. Moreover, the error of the quadrature is $\mathcal{O}(\frac{1}{M^2})$, independent of $N$.

In our computation, we combine the Filon approach with Richardson's extrapolation.
We use $M = 2^{14}$ and $2^{11}$ for the finest level of Richardson's extrapolation for 2D and 3D computation, respectively.
For 1D computation, we use the analytical formula (e.g., see \cite{Ortigueira2008}),
\begin{equation}
\label{T-1D-0}
T_p =  \frac{(-1)^p \Gamma(2s+1)}{\Gamma(p + s + 1) \Gamma(s-p+1)}, \quad p = 0, ..., 2N
\end{equation}
where $\Gamma(\cdot )$ is the $\Gamma$-function.

It is interesting to point out that the cost of computing the matrix $T$ via FFT is $\mathcal{O}(M^d \log (M^d))$.
Assuming that a computer can perform $10^9$ floating-point operations per second, computing the matrix $T$ takes about
$(M^d \log (M^d))/10^9 = 5.2$ seconds in 2D (with $M = 2^{14}$) and $196$ seconds in 3D (with $M = 2^{11}$).

\section{The grid-overlay FD method}
\label{SEC:GoFD}

In this section we describe GoFD for solving (\ref{FL-1}) in $d$-dimensions ($d\ge 1$) on arbitrary bounded domain $\Omega$.
We also study the choice of $h_{\text{\text{FD}}}$ that guarantees
the column full rank and positive column sums of the transfer matrix and therefore the solvability of the linear system resulting from
the GoFD discretization of (\ref{FL-1}). Furthermore, the iterative solution and sparse preconditioning for the linear system are discussed.

\subsection{GoFD for arbitrary bounded domains}

For a given bounded domain $\Omega \in \mathbb{R}^d$, we assume that an unstructured simplicial mesh $\mathcal{T}_h$
has been given that fits or approximately fits $\partial \Omega$.
Then, we take a $d$-dimensional cube, $(-R,R)^d$, such that it covers
$\Omega$ and $\mathcal{T}_h$. An overlaying uniform grid (denoted by $\mathcal{T}_{\text{FD}}$)
is created with $2N+1$ nodes in each axial direction for some positive integer $N$ and the spacing is given by
\[
h_{\text{\text{FD}}} = \frac{R}{N}.
\]
See Fig.~\ref{fig:gridoverlay-1} for a sketch of $\mathcal{T}_{\text{FD}}$ and $\mathcal{T}_h$.

Then, a uniform-grid FD approximation $h_{\text{\text{FD}}}^{-2s} A_{\text{FD}}$ (of size $(2N+1)^d\times (2N+1)^d$)
of the fractional Laplacian can be obtained on $\mathcal{T}_{\text{FD}}$ as described in the previous section.
We define $h_{\text{\text{FD}}}^{-2s} A_h$ as the GoFD approximation of the fractional Laplacian on $\Omega$,
where $A_h$ is defined in (\ref{Ah-1}).

\begin{rem}
\label{rem:Ah-1}
The matrix $D_h^{-1} (I_{h}^{\text{FD}})^T$ represents a data transfer from grid $\mathcal{T}_{\text{FD}}$ to mesh $\mathcal{T}_h$.
It is taken as a transpose of $I_{h}^{\text{FD}}$ so that $A_h$ is similar to a symmetric and
positive definite matrix (cf. Theorem~\ref{thm:Ah-1} below).
Moreover,  $D_h^{-1}$ is included in the definition so that the row sums of $D_h^{-1} (I_{h}^{\text{FD}})^T$ are equal to one
and, thus, the transfer preserves constant functions. This inclusion is necessary for the data transfer $D_h^{-1} (I_{h}^{\text{FD}})^T$
to be consistent.
\qed
\end{rem}

\begin{thm}
\label{thm:Ah-1}
If $I_{h}^{\text{FD}}$ has full column rank and positive column sums, then $A_h$ defined in (\ref{Ah-1}) is similar
to a symmetric and positive definite matrix, i.e.,
\begin{equation}
\label{Ah-2}
A_h = D_h^{-\frac{1}{2}}  \cdot \big (I_{h}^{\text{FD}} D_h^{-\frac{1}{2}}\big )^T A_{\text{FD}} \big (I_{h}^{\text{FD}} D_h^{-\frac{1}{2}}\big ) \cdot D_h^{\frac{1}{2}}.
\end{equation}
As a consequence, $A_h$ is invertible.
\end{thm}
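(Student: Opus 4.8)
The plan is to verify the stated factorization \eqref{Ah-2} by direct algebra, then argue that the middle factor is symmetric positive definite, and finally read off similarity and invertibility. The key structural observation is that since $I_{h}^{\text{FD}}$ has positive column sums, the diagonal matrix $D_h$ has strictly positive diagonal entries, so its square roots $D_h^{\pm 1/2}$ are well-defined, diagonal, symmetric, and invertible.

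First I would confirm \eqref{Ah-2}. Using that $D_h^{-1/2}$ is symmetric, the inner transpose expands as $(I_{h}^{\text{FD}} D_h^{-1/2})^T = D_h^{-1/2} (I_{h}^{\text{FD}})^T$. Substituting this into the right-hand side of \eqref{Ah-2}, the two leftmost factors collapse via $D_h^{-1/2} D_h^{-1/2} = D_h^{-1}$ while the trailing $D_h^{-1/2} D_h^{1/2} = I$ cancels on the right, recovering $D_h^{-1} (I_{h}^{\text{FD}})^T A_{\text{FD}} I_{h}^{\text{FD}} = A_h$ as in \eqref{Ah-1}. This establishes \eqref{Ah-2} and exhibits $A_h = P^{-1} B P$ with $P = D_h^{1/2}$ and $B = (I_{h}^{\text{FD}} D_h^{-1/2})^T A_{\text{FD}} (I_{h}^{\text{FD}} D_h^{-1/2})$, so $A_h$ is similar to $B$.

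Next I would show $B$ is symmetric positive definite. Symmetry is immediate from the symmetry of $A_{\text{FD}}$ (Proposition~\ref{pro:AFD}), since $B^T = (I_{h}^{\text{FD}} D_h^{-1/2})^T A_{\text{FD}}^T (I_{h}^{\text{FD}} D_h^{-1/2}) = B$. For positive definiteness, set $M = I_{h}^{\text{FD}} D_h^{-1/2}$ and note that $M$ has full column rank: it is the product of the full-column-rank matrix $I_{h}^{\text{FD}}$ with the invertible diagonal matrix $D_h^{-1/2}$, so its kernel is trivial. Hence for any nonzero vector $\vec{v}$ the image $M \vec{v}$ is nonzero, and by the positive definiteness of $A_{\text{FD}}$ in \eqref{pro:AFD-1} we get $\vec{v}^T B \vec{v} = (M\vec{v})^T A_{\text{FD}} (M \vec{v}) > 0$. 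Thus $B$ is symmetric positive definite.

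Finally, since $A_h$ is similar to $B$ the two share the same spectrum; all eigenvalues of $B$ are strictly positive, hence so are those of $A_h$, and in particular $0$ is not an eigenvalue, giving invertibility. The argument has no genuine obstacle: the only point requiring care is the accounting of hypotheses, namely that positive column sums are exactly what makes $D_h^{-1/2}$ exist, while full column rank of $I_{h}^{\text{FD}}$ is exactly what makes $M$ injective and therefore $B$ positive definite rather than merely positive semi-definite.
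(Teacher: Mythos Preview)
Your proof is correct and follows essentially the same line as the paper's: both verify the factorization \eqref{Ah-2}, identify the inner matrix $\tilde{A}_h = (I_{h}^{\text{FD}} D_h^{-1/2})^T A_{\text{FD}} (I_{h}^{\text{FD}} D_h^{-1/2})$ as symmetric, and use the positive definiteness of $A_{\text{FD}}$ together with the full column rank of $I_{h}^{\text{FD}} D_h^{-1/2}$ to upgrade positive semi-definiteness to strict positive definiteness. The only cosmetic difference is that the paper phrases the last step as a contrapositive (assuming $\vec{u}_h^T \tilde{A}_h \vec{u}_h = 0$ and deducing $\vec{u}_h = 0$), whereas you argue directly via injectivity of $M$.
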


\begin{proof}
When $I_{h}^{\text{FD}}$ has positive column sums, $D_h$ is invertible and thus, the definition (\ref{Ah-1}) is meaningful. Moreover,
it is straightforward to rewrite (\ref{Ah-1}) into (\ref{Ah-2}), indicating that $A_h$ is similar to
\begin{equation}
\label{Ah-3}
\tilde{A}_h = \big (I_{h}^{\text{FD}} D_h^{-\frac{1}{2}}\big )^T A_{\text{FD}} \big (I_{h}^{\text{FD}} D_h^{-\frac{1}{2}}\big ) .
\end{equation}
It is obvious that $\tilde{A}_h$ is symmetric and positive semi-definite. If we can show it to be nonsingular, then $\tilde{A}_h$ is positive definite.
Assume that there is a vector $\vec{u}_h$ such that
$
\vec{u}_h^T \tilde{A}_h \vec{u}_h = 0.
$
Since $A_{\text{FD}}$ is symmetric and positive definite (Proposition~\ref{pro:AFD}), this implies
$
I_{h}^{\text{FD}} D_h^{-\frac{1}{2}} \vec{u}_h = 0.
$
The full column rank assumption of $I_{h}^{\text{FD}}$ and $D_h$ being diagonal mean $\vec{u}_h = 0$. Thus, $\tilde{A}_h$ is nonsingular and,
therefore, $\tilde{A}_h$ is symmetric and positive definite.
\end{proof}

\begin{rem}
\label{rem:Ah-2}
The full column rank assumption of $I_{h}^{\text{FD}}$ in Theorem~\ref{thm:Ah-1} implies that $I_{h}^{\text{FD}}$
has at least as many rows as columns, i.e. $(2N+1)^d\ge N_{v}$. In other words, $\mathcal{T}_{\text{FD}}$ should have
as many vertices as $\mathcal{T}_h$ does.
\qed
\end{rem}

We now consider a special choice of $I_h^{\text{FD}}$:  linear interpolation.
Denote the vertices of $\mathcal{T}_h$ by $\V{x}_j$, $j = 1, ..., N_v$, and the vertices of $\mathcal{T}_{\text{FD}}$ by
$\V{x}_{k}^{\text{FD}}$, $k = 1, ..., N_v^{\text{FD}}$. Consider the piecewise linear interpolation on $\mathcal{T}_h$,
\begin{equation}
\label{interp-1}
I_h u (\V{x}) = \sum_{j=1}^{N_v} u_j \phi_j(\V{x}),
\end{equation}
where $u_j = u(\V{x}_j)$ and $\phi_j$ is the Lagrange-type linear basis function associated with vertex $\V{x}_j$.
Here we assume that all of the basis functions vanish outside $\bar{\Omega}$.
Recall that
\begin{align*}
0 \le \phi_j(\V{x}) \le 1;\qquad
\phi_j(\V{x}) = 0, \quad \forall \V{x} \notin \bar{\omega}_j;
\qquad
\sum_{j=1}^{N_v} \phi_j(\V{x}) = 1, \quad \forall \V{x} \in \bar{\Omega}
\end{align*}
where $\omega_j$ is the patch of elements that have $\V{x}_j$ as one of their vertices. When restricted on an element
$K$ of $\mathcal{T}_h$, the linear interpolation can be expressed as
\begin{equation}
\label{interp-2}
I_h u|_K (\V{x}) = \sum_{j=0}^{d} u_j^K \phi_j^K(\V{x}), \quad \forall \V{x} \in K .
\end{equation}
Let $\vec{u}_h = \{ u(\V{x}_j), j = 1, ..., N_v\}$. Then, for $k = 1, ..., N_v^{\text{FD}}$,
\begin{equation}
\label{I_h_FD-1}
(I_h^{\text{FD}} \vec{u}_h)_k = I_h u (\V{x}_k^{\text{FD}}) = \sum_{j=1}^{N_v} u_j \phi_j(\V{x}_k^{\text{FD}}) .
\end{equation}
This gives
\begin{equation}
\label{IhFD-1}
(I_h^{\text{FD}})_{k,j} = \phi_j(\V{x}_k^{\text{FD}}), \quad k = 1, ..., N_v^{\text{FD}}, \; j = 1, ..., N_v .
\end{equation}

It is worth pointing out that $I_h$ conforms with the homogeneous Dirichlet boundary condition.
Consider an arbitrary vertex $\V{x}_k^{\text{FD}} \in \mathcal{T}_{\text{FD}}$. When $\V{x}_k^{\text{FD}} \notin \Omega$,
it is obvious that $I_h u (\V{x}_k^{\text{FD}}) = 0$. When $\V{x}_k^{\text{FD}} \in \Omega$, on the other hand, there exists
a mesh element $K \in \mathcal{T}_h$ containing $\V{x}_k^{\text{FD}}$ since $\mathcal{T}_h$ is a boundary-fitted mesh for $\Omega$.
In this case, $I_h u (\V{x}_k^{\text{FD}})$ is determined by the values of $u$ at the vertices of $K$ (cf. (\ref{interp-2})).
Particularly, $I_h u (\V{x}_k^{\text{FD}})$ is affected by the values of $u$ on the boundary of $\Omega$ when $K$ is a boundary element.
In this sense, $I_h$ and thus $I_h^{\text{FD}}$ (cf. (\ref{I_h_FD-1}))
and $A_h$ (cf. (\ref{Ah-1})) feel the homogeneous Dirichlet boundary condition.

In the following analysis, we need the following quantities,
\[
N_{val} = \max\limits_{j=1, ..., N_v} \# \omega_j,\quad
N_{\text{FD}}^h = \max\limits_{K\in \mathcal{T}_h} \# \{ \V{x}_k^{\text{FD}} \in \bar{K} \},
\]
where $\#$ stands for the number of members in a set. $N_{val}$ is usually referred to as the valence of $\mathcal{T}_h$.
$N_{\text{FD}}^h$ can be estimated as
\[
N_{\text{FD}}^h \approx \frac{|K|_{max}}{h_{\text{\text{FD}}}^{d}},
\]
where $|K|_{max}$ denotes the volume of the largest element of $\mathcal{T}_h$.
We assume that both $N_{val}$ and $N_{\text{FD}}^h$ are finite and small.

\begin{lem}
\label{lem:IhFD-1}
The transfer matrix $I_h^{\text{FD}}$ associated with piecewise linear interpolation has the following properties.
\begin{itemize}
\item[(i)] Nonnegativity and boundedness: $0 \le (I_h^{\text{FD}})_{k,j} \le 1$ for all $k,j$.
\item[(ii)] Sparsity: $(I_h^{\text{FD}})_{k,j} = 0$ when $\V{x}_k^{\text{FD}} \notin \omega_j$.
\item[(iii)] The row sums of $I_h^{\text{FD}}$ are either 0 or 1, i.e.,
\[
\sum_{j=1}^{N_v} (I_h^{\text{FD}})_{k,j} = \begin{cases} 1, & \text{ for } \V{x}_k^{\text{FD}} \in \bar{\Omega} \\
 0, & \text{ otherwise} \end{cases}
 \quad \forall k = 1, ..., N_v^{\text{FD}}.
\]
\item[(iv)] The column sums, $D_{h,j} = \sum_{k=1}^{N_v^{\text{FD}}} (I_h^{\text{FD}})_{k,j}$, $j=1, ..., N_v$, are bounded by
\begin{equation}
\label{lem:IhFD-1:1}
\max_{\V{x}_k^{\text{FD}} \in \omega_j} \phi_j(\V{x}_k^{\text{FD}}) \le D_{h,j} \le N_{val} N_{\text{FD}}^h .
\end{equation}
\item[(v)]
\begin{equation}
\label{lem:IhFD-1:2}
\lambda_{max}\big (  (I_h^{\text{FD}})^T I_h^{\text{FD}}\big ) \le N_{val} N_{\text{FD}}^h, \quad
\lambda_{max}\big (  I_h^{\text{FD}} (I_h^{\text{FD}})^T \big ) \le N_{val} N_{\text{FD}}^h .
\end{equation}
\end{itemize}
\end{lem}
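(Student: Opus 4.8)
The plan is to verify the five items in turn, all of which rest on the defining relation $(I_h^{\text{FD}})_{k,j} = \phi_j(\V{x}_k^{\text{FD}})$ in (\ref{IhFD-1}) together with the standard properties of the nodal basis functions listed after (\ref{interp-1}): $0\le\phi_j\le 1$, the support property $\phi_j(\V{x})=0$ for $\V{x}\notin\bar\omega_j$, and the partition-of-unity identity $\sum_j\phi_j=1$ on $\bar\Omega$. Items (i), (ii), and (iii) are then immediate. For (i) I would simply invoke $0\le\phi_j\le 1$. For (ii) the support property gives the vanishing of the entry whenever $\V{x}_k^{\text{FD}}$ lies outside the patch, noting in addition that $\phi_j$ vanishes on the outer faces of $\omega_j$ (those opposite to $\V{x}_j$ in each element), which covers the boundary case $\V{x}_k^{\text{FD}}\in\partial\omega_j$. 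For (iii) I would fix a row index $k$ and split into two cases: if $\V{x}_k^{\text{FD}}\in\bar\Omega$ the partition-of-unity identity gives row sum $1$, while if $\V{x}_k^{\text{FD}}\notin\bar\Omega$ all basis functions vanish there, by the assumption that they are extended by zero outside $\bar\Omega$, and the row sum is $0$.

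For (iv) the lower bound is trivial: since $D_{h,j}=\sum_k\phi_j(\V{x}_k^{\text{FD}})$ is a sum of nonnegative terms, it dominates any single term $\phi_j(\V{x}_k^{\text{FD}})$ with $\V{x}_k^{\text{FD}}\in\omega_j$, hence also their maximum. The upper bound is the first genuinely quantitative step: by the sparsity in (ii), only grid points lying in $\bar\omega_j$ contribute to the column sum, and since $\omega_j$ consists of at most $N_{val}$ elements, each containing at most $N_{\text{FD}}^h$ points of $\mathcal{T}_{\text{FD}}$, there are at most $N_{val}N_{\text{FD}}^h$ nonzero terms; bounding each by $1$ via (i) yields $D_{h,j}\le N_{val}N_{\text{FD}}^h$. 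Any double counting of grid points on faces shared by two elements of the patch only strengthens the inequality, so the overcount is harmless.

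Finally, for (v) I would use the observation that $(I_h^{\text{FD}})^T I_h^{\text{FD}}$ and $I_h^{\text{FD}}(I_h^{\text{FD}})^T$ share the same largest eigenvalue, namely $\|I_h^{\text{FD}}\|_2^2$, so it suffices to bound the spectral norm once. Applying the standard estimate $\|B\|_2^2\le\|B\|_1\,\|B\|_\infty$ with $B=I_h^{\text{FD}}$, I would identify $\|B\|_\infty$ with the maximum row sum, which is $\le 1$ by (iii), and $\|B\|_1$ with the maximum column sum $\max_j D_{h,j}$, which is $\le N_{val}N_{\text{FD}}^h$ by (iv). Multiplying the two bounds gives $\lambda_{max}\le N_{val}N_{\text{FD}}^h$ for both products simultaneously.

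I expect the only nonroutine points to be the counting argument behind the upper bound in (iv), where one must pass through the patch decomposition and accept an overcount on shared faces, and the invocation of the norm inequality $\|B\|_2^2\le\|B\|_1\,\|B\|_\infty$ in (v) together with the equality of the leading eigenvalues of $B^TB$ and $BB^T$. Everything else is a direct reading-off of the hat-function properties, so the proof should be short once these two observations are in place.
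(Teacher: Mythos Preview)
Your proposal is correct. For parts (i)--(iv) it is essentially identical to the paper's proof: both rely directly on (\ref{IhFD-1}) and the three hat-function properties, and for the upper bound in (iv) both decompose the patch $\omega_j$ into its $\le N_{val}$ elements, each holding $\le N_{\text{FD}}^h$ grid points.

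For part (v) you take a slightly different route. The paper observes that $(I_h^{\text{FD}})^T I_h^{\text{FD}}$ and $I_h^{\text{FD}}(I_h^{\text{FD}})^T$ are entrywise nonnegative, then computes their row sums explicitly (using (iii) and (iv)) and invokes the Perron--Frobenius-type bound $\rho(M)\le\max_i\sum_j M_{ij}$ from \cite[Theorem~8.1.22]{HJ1985}. You instead bound $\|I_h^{\text{FD}}\|_2^2$ directly via $\|B\|_2^2\le\|B\|_1\|B\|_\infty$, reading off $\|B\|_\infty\le 1$ from (iii) and $\|B\|_1\le N_{val}N_{\text{FD}}^h$ from (iv), and then use that $B^TB$ and $BB^T$ share their largest eigenvalue. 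Your argument is a touch more economical: it works with the row and column sums of $I_h^{\text{FD}}$ itself rather than those of the product matrices, and it does not need the nonnegativity of the products. The paper's approach, on the other hand, makes the mechanism transparent by showing exactly how (iii) and (iv) combine inside the row sums of the products. Both are standard and yield the same constant.
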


\begin{proof}
(i), (ii), and (iii) follow from (\ref{IhFD-1}) and the properties of linear basis functions. For (iv), we have
\[
D_{h,j} = \sum_{k=1}^{N_v^{\text{FD}}} \phi_j(\V{x}_k^{\text{FD}})
= \sum_{\V{x}_k^{\text{FD}} \in \omega_j} \phi_j(\V{x}_k^{\text{FD}})
= \sum_{K \in \omega_j} \sum_{\V{x}_k^{\text{FD}} \in K} \phi_j(\V{x}_k^{\text{FD}}) .
\]
Then, (\ref{lem:IhFD-1:1}) follows from the above equation, the definition
of $N_{val}$ and $N_{\text{FD}}^h$, and the nonnegativity of the basis functions.

For (v), we notice that both $(I_h^{\text{FD}})^T I_h^{\text{FD}}$ and $ I_h^{\text{FD}} (I_h^{\text{FD}})^T$ are nonnegative matrices.
Using (iii) and (iv), we can show that their row sums are bounded above by $N_{val} N_{\text{FD}}^h$. Then,
(\ref{lem:IhFD-1:2}) follows from \cite[Theorem 8.1.22]{HJ1985} about the spectral radius of nonnegative matrices.
\end{proof}

Next we study how small $h_{\text{\text{FD}}}$ should be (or, equivalently, how large $N$ should be)
to guarantee that $D_h$ is invertible and $I_h^{\text{FD}}$ has full column rank.
To this end, we need a few properties of simplexes in $\mathbb{R}^d$.
For a simplex $K$, we denote the facet formed
by all of its vertices except $\V{x}_j^K$ by $S_j$ and the distance (called the height or altitude) from $\V{x}_j^K$ to $S_j$
by $a_j$. The minimum height of $K$ is denoted by $a_K$, i.e., $a_K = \min_{j} a_j$ and the minimum element height
of $\mathcal{T}_h$ is denoted by $a_h$, i.e., $a_h = \min_{K} a_K$.

\begin{lem}
\label{lem:simplex-cube}
Any simplex $K \in \mathbb{R}^d$ contains a cube of side length at least $\frac{2 a_K}{(d+1)\sqrt{d}}$.
\end{lem}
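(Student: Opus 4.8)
The plan is to reduce the problem to inscribing a ball in $K$ and then inscribing an axis-aligned cube in that ball. First I would let $r$ denote the inradius of $K$ (the radius of the inscribed ball, with center the incenter $c$). The key identity I would establish is
\[
\frac{1}{r} = \sum_{j=0}^{d} \frac{1}{a_j},
\]
relating the inradius to the $d+1$ heights. To prove it, I would decompose $K$ into $d+1$ sub-simplexes by joining the incenter $c$ to each facet $S_j$. Because $c$ lies at distance exactly $r$ from every facet, each sub-simplex has apex $c$, base $S_j$, and height $r$, so its volume is $\tfrac{1}{d} r\, |S_j|$, where $|S_j|$ is the $(d-1)$-dimensional measure of $S_j$. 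Summing gives $V = \tfrac{r}{d}\sum_j |S_j|$. On the other hand, computing $V$ from vertex $\V{x}_j^K$ and its opposite facet gives $V = \tfrac{1}{d} a_j |S_j|$, i.e. $|S_j| = dV/a_j$. Substituting this into the previous expression yields $V = rV \sum_j \tfrac{1}{a_j}$, and dividing by $V$ gives the identity.

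With the identity in hand, I would bound the inradius from below. Since $a_j \ge a_K$ for every $j$ by the definition of $a_K$, the identity gives
\[
\frac{1}{r} = \sum_{j=0}^{d} \frac{1}{a_j} \le \frac{d+1}{a_K},
\qquad\text{hence}\qquad
r \ge \frac{a_K}{d+1}.
\]
Thus $K$ contains a ball of radius at least $a_K/(d+1)$ centered at $c$.

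Finally, I would inscribe an axis-aligned cube in this ball, centered at $c$. A cube of side length $s$ has space diagonal $s\sqrt{d}$, so it fits inside a ball of radius $r$ precisely when $s\sqrt{d} \le 2r$; taking $s = 2r/\sqrt{d}$ gives such a cube, which lies inside the ball and therefore inside $K$. Combining with the inradius bound,
\[
s = \frac{2r}{\sqrt{d}} \ge \frac{2}{\sqrt{d}}\cdot \frac{a_K}{d+1} = \frac{2 a_K}{(d+1)\sqrt{d}},
\]
which is exactly the claimed side length.

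I expect the only nonroutine step to be the inradius identity $1/r = \sum_j 1/a_j$; the volume decomposition from the incenter is the crux, while the cube-in-ball estimate and the final chaining of inequalities are elementary. A minor point to note carefully is that the inscribed ball is genuinely contained in $K$ (so that any cube inside the ball is inside $K$), which is immediate from the definition of the incenter as the point equidistant from all facets at distance $r$.
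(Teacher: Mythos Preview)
Your proof is correct and follows essentially the same route as the paper: bound the inradius via the identity $1/r = \sum_j 1/a_j$, deduce $r \ge a_K/(d+1)$, and then inscribe a cube of side $2r/\sqrt{d}$ in the inscribed ball. The only difference is that the paper cites the inradius--height identity from the literature, whereas you supply the standard volume-decomposition proof of it, making your argument slightly more self-contained.
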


\begin{proof}
It is known from geometry (e.g., see \cite[Theorem 1]{Nevskii-2018}) that the radius of the largest ball inscribed in any simplex $K$ is
related to the heights of $K$ as
\[
\frac{1}{r_{in}} = \sum_{j=1}^{d+1} \frac{1}{a_j} .
\]
From this, we have
\[
r_{in} \ge \frac{a_K}{d+1}.
\]
Since the length of the diagonals of the largest cube inscribed in the ball is equal to
the diameter of the ball, i.e., $\sqrt{d} \, a = 2 r_{in}$, where $a$ is the side length of the cube,
we get
\[
a = \frac{2 r_{in}}{\sqrt{d}} \ge \frac{2 a_K}{(d+1) \sqrt{d}}.
\]
\end{proof}

\begin{lem}
\label{lem:simplex-barycentric}
The $j$-th barycentric coordinate of an arbitrary point $\V{x}$ on $K$, $\phi_j^K(\V{x})$, is equal to the ratio
of the distance from $\V{x}$ to facet $S_j$, to the height $a_j$.
\end{lem}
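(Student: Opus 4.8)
The plan is to identify the affine barycentric coordinate $\phi_j^K$ with a suitably normalized distance-to-hyperplane function. Recall that, when restricted to $K$, the coordinate $\phi_j^K$ is the unique affine function on $\mathbb{R}^d$ satisfying $\phi_j^K(\V{x}_i^K) = \delta_{ij}$ for the vertices $\V{x}_0^K, \dots, \V{x}_d^K$ of $K$. In particular, $\phi_j^K$ vanishes at every vertex except $\V{x}_j^K$, and since those $d$ vertices span the facet $S_j$, the affine function $\phi_j^K$ is identically zero on the entire hyperplane $H_j$ that is the affine hull of $S_j$.

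First I would introduce the signed distance $\delta_j(\V{x})$ from a point $\V{x}$ to $H_j$, measured positively toward the side containing $\V{x}_j^K$. This is itself an affine function of $\V{x}$ that vanishes precisely on $H_j$. Since any two affine functions on $\mathbb{R}^d$ that vanish on one common hyperplane must be scalar multiples of each other, there is a constant $c$ with $\phi_j^K(\V{x}) = c\, \delta_j(\V{x})$ for all $\V{x}$. I would then fix $c$ by evaluating at the opposite vertex: by definition $\delta_j(\V{x}_j^K) = a_j$ (the height is exactly the perpendicular distance from $\V{x}_j^K$ to $H_j$), while $\phi_j^K(\V{x}_j^K) = 1$, whence $c = 1/a_j$. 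Thus $\phi_j^K(\V{x}) = \delta_j(\V{x})/a_j$, and for $\V{x} \in K$ the left-hand side is nonnegative, so $\delta_j(\V{x})$ coincides with the unsigned distance from $\V{x}$ to $S_j$, yielding the claimed identity.

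The argument is essentially routine; the only point requiring care is the interpretation of the phrase \emph{distance to the facet $S_j$} as the perpendicular distance to its supporting hyperplane $H_j$, which is consistent with the definition of the height $a_j$ and which makes the two affine functions positive multiples of one another on the interior of $K$. Getting the sign convention right so that both $\phi_j^K$ and $\delta_j$ are nonnegative on $K$ is the small bookkeeping step one must not skip.

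An equivalent and perhaps more transparent route avoids the affine-function characterization entirely and uses the volume-ratio formula $\phi_j^K(\V{x}) = \mathrm{vol}(K_j(\V{x}))/\mathrm{vol}(K)$, where $K_j(\V{x})$ is obtained from $K$ by replacing the vertex $\V{x}_j^K$ with $\V{x}$. Since $K$ and $K_j(\V{x})$ share the same base $S_j$, their $d$-dimensional volumes equal $\tfrac{1}{d}\,\mathrm{vol}(S_j)\,\delta_j(\V{x})$ and $\tfrac{1}{d}\,\mathrm{vol}(S_j)\,a_j$ respectively, where $\mathrm{vol}(S_j)$ denotes the $(d-1)$-dimensional volume of the base; the common factor $\tfrac{1}{d}\,\mathrm{vol}(S_j)$ cancels in the ratio and the identity follows at once. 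I would present the distance argument as the main line and note this volume computation as a confirming remark.
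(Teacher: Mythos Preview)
Your main argument is correct and is essentially the same as the paper's proof, which simply notes that the conclusion follows from $\phi_j^K(\V{x}_j^K) = 1$, $\phi_j^K|_{S_j} = 0$, and the linearity of $\phi_j^K$. Your volume-ratio remark is a valid alternative but goes beyond what the paper provides.
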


\begin{proof}
The conclusion follows from $\phi_j^K(\V{x}_j^K) = 1$, $\phi_j^K|_{S_j} = 0$, and
the linearity of $\phi_j^K$.
\end{proof}

\begin{lem}
\label{lem:simplex-interp}
Consider a simplex $\tilde{K} \subset K$ with vertices $\V{y}_k$, $k = 0, ..., d$, and define
\[
v_k = I_h u |_K (\V{y}_k) = \sum_{j=0}^{d} u_j^K \phi_j^K(\V{y}_k), \quad k = 0, ..., d.
\]
Then,
\begin{equation}
\label{lem:simplex-interp-1}
\frac{|\tilde{K}|^2}{(d+1)^{d} |K|^2} \sum_{j=0}^d (u_j^K)^2 \le \sum_{j=0}^d v_j^2
\le (d+1) \sum_{j=0}^d (u_j^K)^2 ,
\end{equation}
where $|K|$ and $|\tilde{K}|$ denote the volume of $K$ and $\tilde{K}$, respectively.
\end{lem}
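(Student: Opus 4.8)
The plan is to recast the statement as a pair of spectral bounds on one matrix. Collecting the barycentric coordinates of the vertices of $\tilde{K}$ into the $(d+1)\times(d+1)$ matrix $P$ with entries $P_{k,j} = \phi_j^K(\V{y}_k)$, the definition of $v_k$ becomes $\vec{v} = P \vec{u}$, where $\vec{u} = (u_0^K, \dots, u_d^K)^T$ and $\vec{v} = (v_0, \dots, v_d)^T$. Hence $\sum_j v_j^2 = \vec{u}^T P^T P \vec{u}$, and (\ref{lem:simplex-interp-1}) is equivalent to
\[
\frac{|\tilde{K}|^2}{(d+1)^d |K|^2} \le \lambda_{\min}(P^T P), \qquad \lambda_{\max}(P^T P) \le d+1 .
\]
Since $\V{y}_k \in \tilde{K} \subset K$, each row of $P$ is the barycentric-coordinate vector of a point of $K$, so $P$ is entrywise nonnegative with row sums equal to one.

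For the upper bound I would use convexity. Writing $v_k = \sum_j P_{k,j}\, u_j^K$ as a convex combination and applying Jensen's inequality (equivalently, weighted Cauchy--Schwarz) gives $v_k^2 \le \sum_j P_{k,j}\,(u_j^K)^2$. Summing over $k$ and exchanging the order of summation yields $\sum_k v_k^2 \le \sum_j \big(\sum_k P_{k,j}\big)(u_j^K)^2$; each column sum $\sum_k P_{k,j}$ is at most $d+1$, because $P$ has $d+1$ rows with entries in $[0,1]$. This is exactly the right-hand inequality of (\ref{lem:simplex-interp-1}).

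The crux is the lower bound, and the key identity I would establish is
\[
|\det P| = \frac{|\tilde{K}|}{|K|} .
\]
To prove it, append a row of ones to the coordinate matrices: let $X$ and $Y$ be the $(d+1)\times(d+1)$ matrices whose $j$-th and $k$-th columns are $(\V{x}_j^K, 1)^T$ and $(\V{y}_k, 1)^T$, respectively. Because $\V{y}_k = \sum_j P_{k,j} \V{x}_j^K$ together with $\sum_j P_{k,j} = 1$ means exactly $(\V{y}_k,1)^T = \sum_j P_{k,j} (\V{x}_j^K,1)^T$, we get $Y = X P^T$, so $\det Y = \det X \, \det P$. Since $|\det X| = d!\,|K|$ and $|\det Y| = d!\,|\tilde{K}|$ by the standard simplex-volume formula, the identity follows.

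Finally I would combine the two estimates via the singular values of $P$. Their product equals $|\det P| = |\tilde{K}|/|K|$, while the upper bound already shows every singular value is at most $\sqrt{d+1}$. Therefore the smallest one satisfies $\sigma_{\min}(P) \ge |\det P| / (\sqrt{d+1})^{d} = |\tilde{K}| / \big((d+1)^{d/2} |K|\big)$, and squaring gives $\lambda_{\min}(P^T P) \ge |\tilde{K}|^2 / \big((d+1)^d |K|^2\big)$, the left-hand inequality. The main obstacle is recognizing and proving the determinant--volume identity; once it is in hand, the singular-value bookkeeping is routine.
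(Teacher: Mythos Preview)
Your proof is correct and is essentially the same argument as the paper's: your barycentric matrix $P$ satisfies $P^T = E^{-1}B$ in the paper's notation, so your $P^TP$ is exactly the matrix $E^{-1}BB^TE^{-T}$ the paper analyzes, and your singular-value estimate $\sigma_{\min}(P)\ge |\det P|/(\sqrt{d+1})^d$ is the same as the paper's eigenvalue bound $\lambda_{\min}\ge \det/\lambda_{\max}^{\,d}$. The only difference is packaging---you introduce $P$ directly and speak of singular values, while the paper works through the homogeneous-coordinate matrices $E,B$ and eigenvalues---but the content is identical.
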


\begin{proof}
Recalling that $\phi_j^K(\V{x}) \ge 0$ and $\sum_{j=0}^d \phi_j^K(\V{x}) = 1$, we have
\begin{align*}
\sum_{k=0}^d v_k^2 & \le \sum_{k=0}^d \left (\sum_{j=0}^{d} |u_j^K| \phi_j^K(\V{y}_k) \right )^2
 \le \sum_{k=0}^d \left (\sum_{j=0}^{d} |u_j^K|^2 \phi_j^K(\V{y}_k) \right )
 \le (d+1) \sum_{j=0}^{d} |u_j^K|^2,
\end{align*}
which gives the right inequality of (\ref{lem:simplex-interp-1}).

To prove the left inequality of (\ref{lem:simplex-interp-1}), from
\[
\V{x} = \sum_{j=0}^d \V{x}_j^K \phi_j^K(\V{x}), \quad 1 = \sum_{j=0}^d \phi_j^K(\V{x}),
\]
we have
\[
\begin{bmatrix} \phi_0^K \\ \vdots \\ \phi_d^K \end{bmatrix} = E^{-1} \begin{bmatrix} \V{x} \\ 1 \end{bmatrix},
\quad E = \begin{bmatrix} \V{x}_0^K & \V{x}_1^K & \cdots & \V{x}_d^K \\ 1 & 1 & \cdots & 1 \end{bmatrix} .
\]
It is known that $\det(E) = d!|K|$. Using this and letting $\vec{u}^K = (u_0^K, ..., u_d^K)^T$, we have
\begin{align}
\sum_{k=0}^d v_k^2 & = \sum_{k=0}^d (\vec{u}^K)^T E^{-1} \begin{bmatrix} \V{y}_k \\ 1 \end{bmatrix}
\begin{bmatrix} \V{y}_k \\ 1 \end{bmatrix}^T E^{-T} \vec{u}^K
\notag \\
& =  (\vec{u}^K)^T E^{-1} \sum_{k=0}^d \begin{bmatrix} \V{y}_k \\ 1 \end{bmatrix}
\begin{bmatrix} \V{y}_k \\ 1 \end{bmatrix}^T E^{-T} \vec{u}^K
\notag \\
& = (\vec{u}^K)^T E^{-1} B B^T E^{-T} \vec{u}^K,
\label{lem:simplex-interp-2}
\end{align}
where
\[
B = \begin{bmatrix} \V{y}_0 & \V{y}_1 & \cdots & \V{y}_d \\ 1 & 1 & \cdots & 1 \end{bmatrix} .
\]
The right inequality of (\ref{lem:simplex-interp-1}) implies that
\[
\lambda_{max}(E^{-1} B B^T E^{-T}) \le (d+1).
\]
On the other hand,
\[
\det(E^{-1} B B^T E^{-T}) = \frac{\det(B)^2}{\det(E)^2} = \frac{|\tilde{K}|^2}{|K|^2} .
\]
Since the determinant of a matrix is equal to the product of its eigenvalues, we get
\begin{align*}
\lambda_{min}(E^{-1} B B^T E^{-T})
\ge \frac{\det(E^{-1} B B^T E^{-T})}{\lambda_{max}(E^{-1} B B^T E^{-T})^d}
\ge \frac{|\tilde{K}|^2}{(d+1)^{d} |K|^2} .
\end{align*}
Combining this with (\ref{lem:simplex-interp-2}) we obtain the left inequality of (\ref{lem:simplex-interp-1}).
\end{proof}

\begin{thm}
\label{thm:IhFD}
If we choose
\begin{equation}
h_{\text{\text{FD}}} \le \frac{a_h}{(d+1)\sqrt{d}},
\label{hFD-1}
\end{equation}
where $a_h$ is the minimum element height of $\mathcal{T}_h$, then the transfer matrix $I_h^{\text{FD}}$ associated with piecewise
linear interpolation has the following properties.
\begin{itemize}
\item[(i)]
\begin{equation}
\frac{1}{(d+1)\sqrt{d}} \cdot \frac{a_h}{h} \le D_{h,j} \le N_{val} N_{\text{FD}}^h,\quad \forall j = 1, ..., N_v
\label{thm:IhFD:0}
\end{equation}
and thus, $D_h$ is invertible. Here, $h$ is the maximum element diameter of $\mathcal{T}_h$.
\item[(ii)] The minimum eigenvalue of $(I_{h}^{\text{FD}})^T I_{h}^{\text{FD}}$ is bounded below by
\begin{equation}
\label{thm:IhFD:1}
\lambda_{min}\big ( (I_{h}^{\text{FD}})^T I_{h}^{\text{FD}} \big ) \ge C \left (\frac{a_h}{h}\right )^{2d},
\end{equation}
where $C$ is a positive constant.
\item[(iii)] $I_h^{\text{FD}}$ has full column rank.
\end{itemize}
\end{thm}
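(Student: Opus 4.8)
The plan is to prove the three parts in order, using the geometric facts in Lemmas~\ref{lem:simplex-cube}--\ref{lem:simplex-interp} together with the single observation that condition (\ref{hFD-1}) forces every element of $\mathcal{T}_h$ to contain interior nodes of $\mathcal{T}_{\text{FD}}$. The common mechanism is this: by Lemma~\ref{lem:simplex-cube} a simplex $K$ with minimum height $a_K\ge a_h$ contains a cube of side at least $\frac{2 a_K}{(d+1)\sqrt d}\ge \frac{2 a_h}{(d+1)\sqrt d}\ge 2 h_{\text{FD}}$; since the inscribed ball is rotation invariant we may take this cube axis-aligned, and any axis-aligned cube of side $\ge h_{\text{FD}}$ inside $\bar\Omega$ contains at least one node of $\mathcal{T}_{\text{FD}}$ (its projection onto each coordinate axis is a closed interval of length $\ge h_{\text{FD}}$, hence contains a grid coordinate). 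This is the only place where (\ref{hFD-1}) enters.

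For part (i) I would fix a vertex $\V{x}_j$ and any element $K$ having $\V{x}_j$ as a vertex, and apply the homothety $\V{x}\mapsto \V{x}_j+\tfrac12(\V{x}-\V{x}_j)$ centred at $\V{x}_j$. Its image $K'$ is a simplex similar to $K$ with ratio $\tfrac12$, hence with minimum height $a_K/2$, on which the affine function $\phi_j$ satisfies $\phi_j\ge\tfrac12$ (because $\phi_j(\V{x}_j)=1$ and $\phi_j\ge 0$). By the mechanism above, $K'$ contains a cube of side $\ge \frac{a_K}{(d+1)\sqrt d}\ge h_{\text{FD}}$, hence a node $\V{x}_k^{\text{FD}}\in K'\subseteq\omega_j$ with $\phi_j(\V{x}_k^{\text{FD}})\ge\tfrac12$. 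Thus $D_{h,j}\ge \phi_j(\V{x}_k^{\text{FD}})\ge\tfrac12$, which is in particular no smaller than the stated bound $\frac{1}{(d+1)\sqrt d}\frac{a_h}{h}\le\tfrac12$; the upper bound and the invertibility of $D_h$ then follow at once from Lemma~\ref{lem:IhFD-1}(iv).

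The heart of the argument is part (ii), where I must bound $\lambda_{min}\big((I_h^{\text{FD}})^TI_h^{\text{FD}}\big)=\min_{\vec u_h\ne 0}\|I_h^{\text{FD}}\vec u_h\|^2/\|\vec u_h\|^2$ from below, with $\|I_h^{\text{FD}}\vec u_h\|^2=\sum_k (I_h u(\V{x}_k^{\text{FD}}))^2$. Element by element I would produce a nondegenerate sub-simplex $\tilde K\subseteq K$ whose $d+1$ vertices are nodes of $\mathcal{T}_{\text{FD}}$ and whose volume is bounded below by a multiple of $a_h^d$: taking the axis-aligned cube $Q\subseteq K$ of side $\ell\ge \frac{2a_K}{(d+1)\sqrt d}\ge 2h_{\text{FD}}$, the nodes of $\mathcal{T}_{\text{FD}}$ in $Q$ span a length $\ge \ell/3$ along each axis, so the ``corner'' simplex with apex at the least-coordinate node and legs equal to these spans has volume $\ge \frac{1}{d!}(\ell/3)^d\ge c_d\,a_h^d$. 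Since every element has diameter $\le h$, Hadamard's inequality gives $|K|\le h^d/d!$, whence $\frac{|\tilde K|^2}{(d+1)^d|K|^2}\ge \hat C\,(a_h/h)^{2d}$. Applying the left inequality of (\ref{lem:simplex-interp-1}) on each $K$ to the nodal values $v=I_h u$ at the vertices of $\tilde K$ and summing over all elements, I would use that each node lies in the closure of at most $N_{val}$ elements and that $\sum_K\sum_{j=0}^d (u_j^K)^2\ge\|\vec u_h\|^2$ to conclude $\|I_h^{\text{FD}}\vec u_h\|^2\ge \frac{\hat C}{N_{val}}(a_h/h)^{2d}\|\vec u_h\|^2$, i.e. (\ref{thm:IhFD:1}). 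Part (iii) is then immediate, since (\ref{thm:IhFD:1}) makes $(I_h^{\text{FD}})^TI_h^{\text{FD}}$ positive definite, so $I_h^{\text{FD}}$ has trivial kernel and thus full column rank.

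The step I expect to be most delicate is the volume lower bound $|\tilde K|\ge c_d\,a_h^d$ in part (ii). One must choose the node sub-simplex to be large, comparable to the inscribed cube, rather than a single grid cell, since a grid-cell simplex has volume only of order $h_{\text{FD}}^d$ and $h_{\text{FD}}$ may be far smaller than $a_h$; the bookkeeping that the nodes in $Q$ span $\ge \ell/3$ per axis (which relies on $\ell\ge 2h_{\text{FD}}$, hence on (\ref{hFD-1})) is exactly what makes this uniform in the mesh and in $h_{\text{FD}}$. A secondary technical point is the global assembly, namely controlling the multiplicity with which a node is reused across the element-wise sub-simplices; the valence bound $N_{val}$ suffices for this.
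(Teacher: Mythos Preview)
Your proof is correct and follows the same skeleton as the paper's: invoke Lemma~\ref{lem:simplex-cube} to place an axis-aligned cube inside each element, locate grid nodes with controlled barycentric coordinates for~(i), feed a grid-node sub-simplex $\tilde K\subset K$ into Lemma~\ref{lem:simplex-interp} for~(ii), and deduce~(iii). The execution differs in two useful ways. For~(i), the paper argues via Lemma~\ref{lem:simplex-barycentric} that some vertex of the grid cell contained in $K$ lies at distance $\ge h_{\text{FD}}$ from $S_j$, giving $\phi_j\ge h_{\text{FD}}/a_j$; your homothety to the half-scale copy $K'$ is a clean alternative that yields the stronger, $h_{\text{FD}}$-independent bound $\phi_j\ge\tfrac12$. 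For~(ii), the paper takes $\tilde K$ on $d+1$ vertices of a \emph{single} grid cell of side $h_{\text{FD}}$, so that $|\tilde K|\sim h_{\text{FD}}^d$ and the resulting ratio $|\tilde K|/|K|$ scales like $(h_{\text{FD}}/h)^d$ rather than $(a_h/h)^d$; your construction of $\tilde K$ spanning the full inscribed cube---via the observation that the grid nodes in $Q$ span at least $\ell/3$ per axis whenever $\ell\ge 2h_{\text{FD}}$---gives $|\tilde K|\ge c_d\,a_h^d$ uniformly in $h_{\text{FD}}$, which is exactly what is needed for~(\ref{thm:IhFD:1}) to hold under the one-sided hypothesis~(\ref{hFD-1}). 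This is precisely the delicate point you flagged, and your treatment of it is the more careful one.
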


\begin{proof}
(i) Lemma~\ref{lem:simplex-cube} implies that any element $K$ of $\mathcal{T}_h$ contains
a cube of side length $\frac{2 a_h}{(d+1)\sqrt{d}}$. Thus, when $h_{\text{\text{FD}}}$ satisfies (\ref{hFD-1}), $K$
contains at least a cubic cell of $\mathcal{T}_{\text{FD}}$.
As a consequence, for any vertex (say $\V{x}_j$) of $K$, there is a node (say $\V{x}_k^{\text{FD}}$)
of $\mathcal{T}_{\text{FD}}$ that is in $K$ and its distance to the facet opposing $\V{x}_j$ is at least $h_{\text{\text{FD}}}$.
From Lemma~\ref{lem:simplex-barycentric}, the barycentric coordinate of $\V{x}_k^{\text{FD}}$
at $\V{x}_j$ is greater than or equal to $h_{\text{\text{FD}}}/a_j \ge a_h/(h (d+1) \sqrt{d})$.
Then, (\ref{thm:IhFD:0}) follows from (\ref{lem:IhFD-1:1}) and $D_h$ is invertible.

(ii) For any function $\vec{u} =\{ u(\V{x}_j), j = 1, ..., N_v\}$,
\begin{equation}
\vec{u}^T (I_h^{\text{FD}})^T I_h^{\text{FD}} \vec{u} = \sum_{K \in \mathcal{T}_h} \sum_{\V{x}_k^{\text{FD}} \in K}
\big ( I_h u |_K (\V{x}_k^{\text{FD}}) \big )^2 .
\label{thm:IhFD:2}
\end{equation}
As mentioned in the proof of (i), each element of $\mathcal{T}_h$ contains at least a cubic cell of $\mathcal{T}_{\text{FD}}$.
We can take $\tilde{K}$ in Lemma~\ref{lem:simplex-interp}
as a simplex formed by any $d+1$ vertices of the cubic cell. Then $|\tilde{K}|/|K| \ge C (a_h/h)^d$ for some constant $C$.
From Lemma~\ref{lem:simplex-interp}, we have
\[
\sum_{\V{x}_k^{\text{FD}} \in K}
\big ( I_h u |_K (\V{x}_k^{\text{FD}}) \big )^2 \ge C\left  (\frac{a_h}{h}\right )^{2 d} \sum_{j=0}^d (u_j^K)^2 ,
\]
which yields (\ref{thm:IhFD:1}).

(iii) is a consequence of (ii).
\end{proof}

\begin{rem}
\label{rem:Ah-3}
The choice (\ref{hFD-1}) is needed for the theoretical guarantee of the full column rank of $I_h^{\text{FD}}$ and the invertibility of $D_h$.
However, the requirement is only a sufficient condition. Numerical experiment shows that we can use much larger $h_{\text{\text{FD}}}$, for instance,
$h_{\text{\text{FD}}} = a_h$, which works well for the examples we have tested.
\qed
\end{rem}

\subsection{Linear systems, stability, and convergence}

The GoFD discretization of the homogeneous Dirichlet problem (\ref{FL-1})
on the unstructured mesh $\mathcal{T}_h$ is defined as
\begin{equation}
\label{GoFD-1}
\frac{1}{h_{\text{\text{FD}}}^{2s}} D_h^{-1} (I_{h}^{\text{FD}})^T A_{\text{FD}} I_{h}^{\text{FD}} \vec{u}_h = D_h^{-1} (I_{h}^{\text{FD}})^T\vec{f}_{\text{FD}},
\end{equation}
where
\begin{align*}
&\vec{u}_h = \{ u_{j} \approx u(\V{x}_j),\; j = 1, ..., N_v; u_j = 0, \text{ for } \V{x}_j \in \Omega^c \},
\\
&\vec{f}_{\text{FD}} = \{ f(\V{x}_k^{\text{FD}}),\; k = 1, ..., N_v^{\text{FD}}; f(\V{x}_k^{\text{FD}}) = 0, \text{ for } \V{x}_k^{\text{FD}} \in \Omega^c \}.
\end{align*}
Notice that $u$ is approximated on the vertices of $\mathcal{T}_h$ while the right-hand side function
$f$ is calculated on the vertices of $\mathcal{T}_{\text{FD}}$.
We can also use the values of $f$ at the vertices of $\mathcal{T}_h$.
In this case, we have
\begin{equation}
\label{GoFD-3}
\frac{1}{h_{\text{\text{FD}}}^{2s}} D_h^{-1} (I_{h}^{\text{FD}})^T A_{\text{FD}} I_{h}^{\text{FD}} \vec{u}_h = \vec{f}_{h}.
\end{equation}
Numerical experiment shows that (\ref{GoFD-1}) and (\ref{GoFD-3}) produce comparable results.
Since (\ref{GoFD-1}) provides some convenience in defining the local truncation error (cf. (\ref{GoFD-lte})),
we use (\ref{GoFD-1}) in this work.

The system (\ref{GoFD-1}) can be simplified into
\begin{equation}
\label{GoFD-2}
(I_{h}^{\text{FD}})^T A_{\text{FD}} I_{h}^{\text{FD}}  \vec{u}_h = h_{\text{\text{FD}}}^{2 s} (I_{h}^{\text{FD}})^T \vec{f}_{\text{FD}} .
\end{equation}
One may notice that $D_h^{-1}$ does not appear in the above equation. This is due to the special choice of the right-hand side
function of (\ref{GoFD-1}). It appears in (\ref{GoFD-3}) for a different choice of the right-hand side function.
Moreover, since $I_{h}^{\text{FD}} $ is sparse and the multiplication of $A_{\text{FD}}$
with vectors can be carried out efficiently using FFT (cf. Section~\ref{SEC:AU-FFT}),
(\ref{GoFD-2}) is amenable to iterative solution with Krylov subspace methods.
The conjugate gradient method (CG) is used in our computation.
 Recall that the cost for each iteration of CG is proportional to the cost of computing
the matrix-vector multiplication $(I_{h}^{\text{FD}})^T A_{\text{FD}} I_{h}^{\text{FD}} \vec{u}_h$, which can be estimated as
$\mathcal{O}(N^d \log (N^d)) + \mathcal{O}(N_{v})$, where $(2N+1)$ is the number of grid points of $\mathcal{T}_{\text{FD}}$
in each axial direction and $N_v$ is the number of vertices of $\mathcal{T}_h$. When $\mathcal{T}_h$ is quasi-uniform,
the choice $h_{\text{FD}} = \mathcal{O}(a_h)$ leads to $N^d = \mathcal{O}(N_e) = \mathcal{O}(N_v)$, where $N_e$ denotes the number of
elements of $\mathcal{T}_h$. This gives the cost of each CG iteration for solving (\ref{GoFD-2}) as $\mathcal{O}(N_e \log N_e)$.
When $\mathcal{T}_h$ is not quasi-uniform, it is difficult to estimate the cost since, in this case, $a_h$ can be very different from $h$.

\begin{rem}
It is worth noting that only the block of the system (\ref{GoFD-2}) corresponding to the interior vertices is solved
in the actual computation since the unknown variables on $\partial \Omega$ are known.
\qed
\end{rem}

\begin{thm}[Stability]
\label{thm:GoFD-stab}
If $h_{\text{\text{FD}}}$ satisfies (\ref{hFD-1}), then
the solution of (\ref{GoFD-1}) satisfies
\begin{equation}
\label{thm:GoFD-stab:1}
\vec{u}_{h}^T \vec{u}_{h} \le C \, N_{val}  N_{\text{FD}}^h  \left (\frac{h}{a_h}\right )^{4d} \, \vec{f}_{\text{FD}}^T \vec{f}_{\text{FD}} ,
\end{equation}
 where $h$ and $a_h$ are the maximum diameter and minimum height of elements of $\mathcal{T}_h$.
\end{thm}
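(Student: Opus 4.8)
The plan is to mimic the stability argument for the uniform-grid problem in Proposition~\ref{pro:FD-stab}, now applied to the reduced system~\eqref{GoFD-2}. First I would set $B_h = (I_h^{\text{FD}})^T A_{\text{FD}} I_h^{\text{FD}}$. Since $h_{\text{FD}}$ satisfies~\eqref{hFD-1}, Theorem~\ref{thm:IhFD}(iii) guarantees that $I_h^{\text{FD}}$ has full column rank, so by the argument of Theorem~\ref{thm:Ah-1} the matrix $B_h$ is symmetric positive definite and~\eqref{GoFD-2} reads $B_h \vec{u}_h = h_{\text{FD}}^{2s} (I_h^{\text{FD}})^T \vec{f}_{\text{FD}}$. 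The central quantity is the coercivity constant of $B_h$. For any $\vec{u}_h$,
\[
\vec{u}_h^T B_h \vec{u}_h = (I_h^{\text{FD}} \vec{u}_h)^T A_{\text{FD}} (I_h^{\text{FD}} \vec{u}_h)
\ge C\, h_{\text{FD}}^{2s} \| I_h^{\text{FD}} \vec{u}_h\|^2
\ge C\, h_{\text{FD}}^{2s}\, \lambda_{min}\big((I_h^{\text{FD}})^T I_h^{\text{FD}}\big)\, \|\vec{u}_h\|^2 ,
\]
where the first inequality is Proposition~\ref{pro:AFD} (legitimately applied, as it holds for every vector on the finite grid $\mathcal{T}_{\text{FD}}$) and the second is the spectral lower bound. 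Invoking Theorem~\ref{thm:IhFD}(ii) then yields $\lambda_{min}(B_h) \ge C\, h_{\text{FD}}^{2s} (a_h/h)^{2d}$, and in particular $\|B_h^{-1}\| \le C^{-1} h_{\text{FD}}^{-2s} (h/a_h)^{2d}$.

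Next I would reproduce the Cauchy--Schwarz step of Proposition~\ref{pro:FD-stab}, but in the $B_h$-inner product. Multiplying the reduced system by $\vec{u}_h^T$ and writing the right-hand side as $(B_h^{1/2} \vec{u}_h)^T (B_h^{-1/2} (I_h^{\text{FD}})^T \vec{f}_{\text{FD}})$ gives
\[
\vec{u}_h^T B_h \vec{u}_h \le h_{\text{FD}}^{4s}\, \vec{f}_{\text{FD}}^T I_h^{\text{FD}} B_h^{-1} (I_h^{\text{FD}})^T \vec{f}_{\text{FD}}
\le h_{\text{FD}}^{4s}\, \|B_h^{-1}\|\, \lambda_{max}\big(I_h^{\text{FD}} (I_h^{\text{FD}})^T\big)\, \|\vec{f}_{\text{FD}}\|^2 .
\]
Here the factor $N_{val} N_{\text{FD}}^h$ enters through the bound $\lambda_{max}(I_h^{\text{FD}} (I_h^{\text{FD}})^T) \le N_{val} N_{\text{FD}}^h$ of Lemma~\ref{lem:IhFD-1}(v), and one power of $(h/a_h)^{2d}$ enters through $\|B_h^{-1}\|$. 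Finally, combining this upper bound with the coercivity lower bound $\vec{u}_h^T B_h \vec{u}_h \ge C\, h_{\text{FD}}^{2s} (a_h/h)^{2d} \|\vec{u}_h\|^2$ cancels the common $h_{\text{FD}}^{2s}$ and supplies the second power of $(h/a_h)^{2d}$, producing exactly~\eqref{thm:GoFD-stab:1}.

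I expect no serious obstacle, since all of the hard analytic work is already packaged in Proposition~\ref{pro:AFD}, Theorem~\ref{thm:IhFD}, and Lemma~\ref{lem:IhFD-1}. The only point requiring care is the bookkeeping of the two places where $(a_h/h)^{2d}$ appears, once in $\|B_h^{-1}\|$ and once in the concluding coercivity estimate, which together account for the exponent $4d$. I would also remark that carrying the same argument out directly on the grid vector $\vec{w} = I_h^{\text{FD}} \vec{u}_h$ and using $\|A_{\text{FD}}^{-1}\| \le C\, h_{\text{FD}}^{-2s}$ (from Proposition~\ref{pro:AFD}) before returning to $\vec{u}_h$ yields the sharper estimate $\|\vec{u}_h\|^2 \le C (h/a_h)^{2d} \|\vec{f}_{\text{FD}}\|^2$ without the factor $N_{val} N_{\text{FD}}^h$; the stated bound~\eqref{thm:GoFD-stab:1} is the weaker of the two and hence follows a fortiori.
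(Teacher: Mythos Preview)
Your argument is correct and uses the same three ingredients as the paper (Proposition~\ref{pro:AFD}, Lemma~\ref{lem:IhFD-1}(v), Theorem~\ref{thm:IhFD}(ii)), but the paper organizes the Cauchy--Schwarz step more directly: it applies the ordinary Cauchy--Schwarz inequality to $(I_h^{\text{FD}}\vec{u}_h)^T\vec{f}_{\text{FD}}$ and bounds $\|I_h^{\text{FD}}\vec{u}_h\|\le (N_{val}N_{\text{FD}}^h)^{1/2}\|\vec{u}_h\|$, obtaining $\vec{u}_h^T B_h\vec{u}_h\le C\,h_{\text{FD}}^{2s}(N_{val}N_{\text{FD}}^h)^{1/2}\|\vec{u}_h\|\,\|\vec{f}_{\text{FD}}\|$ and then combining with the coercivity lower bound; in particular it never forms $B_h^{-1}$. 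Your route through the $B_h$-weighted Cauchy--Schwarz and $\|B_h^{-1}\|$ is slightly more circuitous but lands on the identical constant.

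Your closing remark is also correct and is a genuine sharpening the paper does not record: from $\vec{w}^T A_{\text{FD}}\vec{w}=h_{\text{FD}}^{2s}\vec{w}^T\vec{f}_{\text{FD}}$ with $\vec{w}=I_h^{\text{FD}}\vec{u}_h$, the $A_{\text{FD}}$-weighted Cauchy--Schwarz together with $\|A_{\text{FD}}^{-1}\|\le C\,h_{\text{FD}}^{-2s}$ gives $\|\vec{w}\|^2\le C\|\vec{f}_{\text{FD}}\|^2$, and only then does $\lambda_{min}((I_h^{\text{FD}})^T I_h^{\text{FD}})\ge C(a_h/h)^{2d}$ enter, yielding $\|\vec{u}_h\|^2\le C(h/a_h)^{2d}\|\vec{f}_{\text{FD}}\|^2$ with no factor $N_{val}N_{\text{FD}}^h$ and only one power of $(h/a_h)^{2d}$.
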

\begin{proof}
Multiplying $\vec{u}_{h}^T$ from left with \eqref{GoFD-2} and using the Cauchy-Schwarz inequality and Lemma~\ref{lem:IhFD-1}, we get
\begin{align*}
\vec{u}_h^T (I_{h}^{\text{FD}})^T A_{\text{FD}} I_{h}^{\text{FD}}  \vec{u}_h
& =   h_{\text{\text{FD}}}^{2 s} ( I_{h}^{\text{FD}} \vec{u}_h)^T \vec{f}_{\text{FD}}
\\
& \le h_{\text{\text{FD}}}^{2 s}  ( ( I_{h}^{\text{FD}} \vec{u}_h)^T  I_{h}^{\text{FD}} \vec{u}_h )^{\frac{1}{2}}
( ( \vec{f}_{\text{FD}} )^T \vec{f}_{\text{FD}} )^{\frac{1}{2}}
\\
& \le C\, h_{\text{\text{FD}}}^{2 s}  (N_{val}  N_{\text{FD}}^h)^{\frac 12}
( \vec{u}_h^T  \vec{u}_h )^{\frac{1}{2}} ( ( \vec{f}_{\text{FD}} )^T \vec{f}_{\text{FD}} )^{\frac{1}{2}} .
\end{align*}
Moreover, from Proposition~\ref{pro:AFD} and Theorem~\ref{thm:IhFD} we have
\begin{align*}
\vec{u}_h^T (I_{h}^{\text{FD}})^T A_{\text{FD}} I_{h}^{\text{FD}}  \vec{u}_h
& \ge C h_{\text{\text{FD}}}^{2 s}  \vec{u}_h^T (I_{h}^{\text{FD}})^T I_{h}^{\text{FD}}  \vec{u}_h
 \ge C h_{\text{\text{FD}}}^{2 s}  \left (\frac{a_h}{h}\right )^{2d}  \vec{u}_h^T  \vec{u}_h .
\end{align*}
Combining the above results, we get
\[
\vec{u}_h^T  \vec{u}_h \le C\, (N_{val}  N_{\text{FD}}^h)^{\frac 12}  \left (\frac{h}{a_h}\right )^{2d}
( \vec{u}_h^T  \vec{u}_h )^{\frac{1}{2}} ( ( \vec{f}_{\text{FD}} )^T \vec{f}_{\text{FD}} )^{\frac{1}{2}} ,
\]
which gives rise to (\ref{thm:GoFD-stab:1}).
\end{proof}

Denote the exact solution of (\ref{FL-1}) by $u = u^e(\V{x})$. We define the local truncation error as
\begin{equation}
\label{GoFD-lte}
\vec{\tau}_{\text{FD}} = \vec{f}_{\text{FD}} - \frac{1}{h_{\text{\text{FD}}}^{2 s}} A_{\text{FD}} I_h^{\text{FD}} \vec{u}_h^{\,e} ,
\end{equation}
where
\[
\vec{u}_h^{\, e} =  \{ u^e(\V{x}_j),\; j = 1, ..., N_v; u^e(\V{x}_j) = 0, \text{ for } \V{x}_j \in \Omega^c \} .
\]
$\vec{\tau}_{\text{FD}}$ can be rewritten into
\begin{equation}
\label{GoFD-lte-2}
\vec{\tau}_{\text{FD}} = \vec{f}_{\text{FD}} - \frac{1}{h_{\text{\text{FD}}}^{2 s}} A_{\text{FD}} \vec{u}_{\text{FD}}^{\,e}
+ \frac{1}{h_{\text{\text{FD}}}^{2 s}} A_{\text{FD}} \big (\vec{u}_{\text{FD}}^{\,e} - I_h^{\text{FD}} \vec{u}_h^{\,e} \big ).
\end{equation}
Thus, $\vec{\tau}_{\text{FD}}$ can be viewed as a combination of the discretization error on the uniform grid $\mathcal{T}_{\text{FD}}$
and the interpolation error from $\mathcal{T}_h$ to $\mathcal{T}_{\text{FD}}$.
From (\ref{GoFD-lte}), we have
\[
(I_h^{\text{FD}})^T A_{\text{FD}} I_h^{\text{FD}} \vec{u}_h^{\,e} = h_{\text{\text{FD}}}^{2 s} (I_h^{\text{FD}})^T \vec{f}_{\text{FD}} - h_{\text{\text{FD}}}^{2 s} (I_h^{\text{FD}})^T \vec{\tau}_{\text{FD}} .
\]
Subtracting (\ref{GoFD-2}) from the above equation, we obtain the error equation as
\begin{equation}
\label{GoFD-error-1}
 (I_h^{\text{FD}})^T A_{\text{FD}} I_h^{\text{FD}} \vec{e}_h = - h_{\text{\text{FD}}}^{2 s} (I_h^{\text{FD}})^T \vec{\tau}_{\text{FD}} ,
 \end{equation}
 where the error is defined as $\vec{e}_h = \vec{u}_h - \vec{u}_h^{\,e}$. From Theorem~\ref{thm:GoFD-stab}, we have the following
 corollary.

\begin{co}[Convergence]
\label{co:GoFD-error}
If $h_{\text{\text{FD}}}$ satisfies (\ref{hFD-1}), the error for the GoFD scheme (\ref{GoFD-1}) is bounded by
\begin{equation}
\label{co:GoFD-error:1}
\vec{e}_h^T \vec{e}_h
\le C N_{val} N_{\text{FD}}^h \left (\frac{h}{a_h}\right )^{4 d}
\vec{\tau}_{\text{FD}}^T \vec{\tau}_{\text{FD}} .
\end{equation}
\end{co}

\begin{rem}
Here we do not attempt to give a rigorous analysis of the local truncation error since it is still challenging to
do so for the uniform FD discretization for solutions of optimal regularity (see Remark~\ref{rem:FD-error}). Instead,
we provide some intuitions here. From (\ref{GoFD-lte-2}) we see that the local truncation error
consists of two parts, one from the uniform FD discretization and the other from linear interpolation.
It is known \cite[Proposition 1.2]{Borthagaray-2018} that the linear interpolation error in $L^2$ norm
is $\mathcal{O}(h^{\min(1,s+1/2-\epsilon)})$
for functions in $H^{s+1/2-\epsilon}(\Omega)$ for any $\epsilon > 0$.
Moreover, it can be proved that $h_{\text{\text{FD}}}^{-2 s}  A_{\text{FD}}$ is bounded in $H^{s}(\Omega)$.
Thus, we can expect that the local truncation error (and thus the error by Corollary~\ref{co:GoFD-error})
for the GoFD scheme (\ref{GoFD-1}) is $\mathcal{O}(h^{\min(1,s+1/2-\epsilon)})$ in $L^2$ norm
if the local truncation error of the uniform FD discretization is in the same order (cf. Remark~\ref{rem:FD-error}).
\qed
\end{rem}

\begin{rem}
\label{rem:FEM-rate}
Interestingly, Borthagaray et al. \cite{Borthagaray-2018} and Acosta et al. \cite{Acosta201701}
show that the error of the linear finite element approximation
of (\ref{FL-1}) in $L^2$ norm is $\mathcal{O}(h^{\min(1,s+1/2)-\epsilon})$ for quasi-uniform meshes and
$\mathcal{O}(\bar{h}^{1 + s})$ for graded meshes.
Here, $\bar{h} \equiv N_e^{-\frac{1}{d}}$ is the average element diameter commonly used
to measure convergence order in mesh adaptation.
The convergence order, $\mathcal{O}(\bar{h}^{1 + s})$,
has also been established by Ainsworth and Glusa \cite{Ainsworth-2017}
for adaptive finite element approximations. Numerical results in Section~\ref{SEC:numerics} show that GoFD has
similar convergence behavior  for quasi-uniform meshes and second-order convergence (in $L^2$ norm) for adaptive meshes.
\qed
\end{rem}

\subsection{Preconditioning with sparse matrices}
\label{SEC:preconditioner}

Various types of preconditioners have been developed for $A_{\text{FD}}$, including
circulant preconditioners \cite{Chan-1996} and the direct use of the Laplacian \cite{Ying-2020}.
In principle, we can use these preconditioners to replace $A_{\text{FD}}$ in the stiffness matrix $(I_{h}^{\text{FD}})^T A_{\text{FD}} I_{h}^{\text{FD}}$
and obtain a preconditioner for (\ref{GoFD-2}).
Here we consider preconditioners based on sparse matrices.
Notice that the fractional Laplacian approaches to the Laplacian operator as $s \to 1$ and the identity operator
as $s \to 0$. Thus, it is reasonable to build an efficient preconditioner
based on the Laplacian at least when $s$ is close to 1.
First,  we choose a sparsity pattern based on the FD discretization
of the Laplacian. For example, we can take the 5-point pattern (cf. (\ref{5-point})) or the 9-point pattern.
Then, we form a sparse matrix using the entries of $A_{\text{FD}}$ at the positions specified by the pattern.
We denote these matrices by $A_{\text{FD}}^{(5)}$ and $A_{\text{FD}}^{(9)}$, respectively.
Next, we define
\begin{equation}
\label{Ah-4}
A_h^{(5)} =  (I_{h}^{\text{FD}} )^T A_{\text{FD}}^{(5)} I_{h}^{\text{FD}} ,
\qquad A_h^{(9)} = (I_{h}^{\text{FD}} )^T A_{\text{FD}}^{(9)} I_{h}^{\text{FD}}  .
\end{equation}
Finally, the preconditioners for (\ref{GoFD-2}) are obtained using the incomplete Cholesky decomposition
of $A_h^{(5)}$ and $A_h^{(9)}$ with level-1 fill-ins. Notice that all of $A_{\text{FD}}^{(5)}$ and $A_{\text{FD}}^{(9)}$ and therefore,
$A_h^{(5)}$ and $A_h^{(9)}$ are sparse and they can be computed economically.
Effectiveness of these preconditioners will be demonstrated in numerical examples.

\section{Mesh adaptation}
\label{SEC:MMPDE}

It is known (e.g. see \cite{Borthagaray-2018,Ros-Oton-2014})
that the solution of (\ref{FL-1}) has low regularity especially near the boundary of $\Omega$.
Thus, it is useful to use mesh adaptation in the numerical solution of (\ref{FL-1}) to improve accuracy and convergence order.
We recall that GoFD described in the previous section uses unstructured meshes for $\Omega$,
which not only works for arbitrary geometry of $\Omega$ but also allows easy incorporation with
existing mesh adaptation algorithms.

\begin{algorithm}[htb]
\caption{Adaptive mesh grid-overlay finite difference method}
\label{alg:amGoFD}
\begin{itemize}
\item[-] Given an initial mesh $\mathcal{T}_h^{(0)}$ for $\Omega$.
\item[-] For $\ell = 1, \ldots, \ell_{max}$
	\begin{itemize}
	\item[-] Solve (\ref{GoFD-2}) on $\mathcal{T}_h^{(\ell)}$ for $u_h^{(\ell)}$.
	\item[-] Generate a new mesh $\mathcal{T}_h^{(\ell+1)}$ using the MMPDE method based on
		    $u_h^{(\ell)}$ and $\mathcal{T}_h^{(\ell)}$.
	\end{itemize}

\item[-] end $\ell$	
\end{itemize}
\end{algorithm}

We use here the MMPDE moving mesh method for mesh adaptation.
The procedure for combining GoFD with the MMPDE method is given in Algorithm~\ref{alg:amGoFD}.
We use $\ell_{max} = 5$ in our computation. Numerical experiment shows that this is sufficient.

The MMPDE method is used to generate the new mesh $\mathcal{T}_h^{(\ell+1)}$
for $\Omega$. The method has been
developed (e.g., see \cite{HK2015,HRR94a,HR11}) for general purpose of mesh adaptation and movement.
It uses the moving mesh PDE (or moving mesh equations in discrete form) to move vertices
continuously in time and in an orderly manner in space.
A key idea of the MMPDE method is viewing any nonuniform mesh as a uniform one in some Riemannian metric
specified by a tensor $\M = \M(\V{x})$.  The metric tensor provides the information needed to control the size, shape, and
orientation of mesh elements throughout the domain. Various metric tensors have been developed in \cite{HS03}.
For the current work, we employ a Hessian-based metric tensor
\begin{equation}
\label{MK-1}
\M_{K} =\det \left(I+\frac{1}{\alpha_h}|H_K(u_h^{(\ell)})|
\right)^{-\frac{1}{d+4}} \left(I+\frac{1}{\alpha_h}|H_K(u_h^{(\ell)})|\right) ,\quad \forall K \in \mathcal{T}_h^{(\ell)}
\end{equation}
where $\det (\cdot)$ denotes the determinant of a matrix,
$H_K(u_h^{(\ell)})$ is a recovered Hessian of $u_h^{(\ell)}$ on the element $K$ (through quadratic least squares fitting),
$|H_K(u_h^{(\ell)})| = \sqrt{H_K(u_h^{(\ell)})^2}$,
and $\alpha_h$ is a regularization parameter defined through the following algebraic equation:
\[
\sum_{K}|K|\, \det\left(I+\frac{1}{\alpha_h}|H_K(H_K(u_h^{(\ell)}))|\right)^{\frac{2}{d+4}}
 =2 |\Omega|.
\]
This metric tensor is known to be optimal for the $L^2$-norm of linear interpolation error \cite{HS03}.

It is known (e.g., see \cite{HK2015,HR11}) that a uniform simplicial mesh $\mathcal{T}_h$ in metric $\M$
satisfies the following equidistribution and alignment conditions,
\begin{align}
  \sqrt{\det(\M_K)}\; |K| &=\frac{\sigma_h}{N_e},\;\qquad \forall K\in \mathcal{T}_h
\label{C-1}
\\
  \frac{1}{2}\text{trace}\left((F'_{K})^{-1} \M_K^{-1} (F'_{K})^{-T}\right)&
  =\det\left((F'_{K})^{-1} \M_K^{-1} (F'_{K})^{-T}\right)^{\frac{1}{2}},\;\qquad \forall K\in \mathcal{T}_h
\label{C-2}
\end{align}
where  $N_e$ denotes the number of elements in $\mathcal{T}_h$,
$F'_K$ is the Jacobian matrix of the affine mapping
$F_K: \hat{K}\to K$, $\hat{K}$ is the reference element taken as an equilateral simplex with unit volume, and
\[
\sigma_h =\sum_{K} \sqrt{\det(\M_K)}\; |K|.
\]
The condition \eqref{C-1} requires all elements to have the same size while \eqref{C-2} requires every element $K$
to be similar to $\hat{K}$, in metric $\M_K$.
An energy function associated with these conditions is given by
\begin{align}
I_h=&\frac{1}{3}\sum_{K}\sqrt{\det(\M_K)}\; |K| \text{trace}\left((F'_{K})^{-1}\M_K^{-1} (F'_{K})^{-T}\right)^{\frac{3 d}{4}}
\notag \\
& \qquad \qquad + \frac{d^{\frac{3d }{4}}}{3}\sum_{K}\sqrt{\det(\M_K)}\; |K|\left(\sqrt{\det(\M_K)} \det(F'_K)\right)^{-\frac{3 d}{4}}.
\label{I-h}
\end{align}
This function is a Riemann sum of a continuous functional developed based on mesh equidistribution and alignment (e.g., see \cite{HR11}).

The energy function $I_h$ is a function of the coordinates of the vertices of $\mathcal{T}_h$, i.e.,
$I_h = I_h(\V{x}_1, ..., \V{x}_{N_v})$.
An approach for minimizing this function is to integrate the gradient system of $I_h$. Thus, we define the moving mesh equations as
\begin{equation}\label{x-1}
   \frac{d \V{x}_i}{d t}=-\frac{\sqrt{\det(\M(\V{x}_i))}}{\tau}\frac{\partial I_h}{\partial \V{x}_i},\qquad i=1,\ldots,N_v
 \end{equation}
where $\tau>0$ is a parameter used to adjust the time scale of mesh movement.
The analytical expression of the derivative of $I_h$ with respect to $\V{x}_i$ can be found using scalar-by-matrix
differentiation \cite{HK2015}. Using this expression, we can rewrite (\ref{x-1}) as
\begin{equation}
 \frac{d \V{x}_i}{d t} = \frac{\sqrt{\det(\M(\V{x}_i))}}{\tau} \sum\limits_{K \in \omega_i} |K| \V{v}_{i_K}^K ,\qquad
 i = 1, ..., N_v
\label{mmpde-1}
\end{equation}
where $\V{v}_{i_K}^K$ is the local mesh velocity
contributed by element $K$ to the vertex $\V{x}_i$. The interested reader is referred to  \cite[Equations (38), (40), and (41)]{HK2015} 
for the analytical expression of $\V{v}_{i_K}^K$.

The nodal velocity needs to be modified at boundary vertices. For fixed boundary
vertices, $\frac{d \V{x}_i}{d t}$ should be set to be zero. If $\V{x}_i$ is allowed to slide along the boundary,
the component of $\frac{d \V{x}_i}{d t}$ in the normal direction of the boundary should be set to be zero.

In our computation, the Matlab ODE solver \textit{ode15s} (a variable-step, variable-order solver
based on the numerical differentiation formulas of orders 1 to 5) is used to integrate (\ref{mmpde-1}), with
the Jacobian matrix approximated by finite differences,
over $t \in (0,1]$ with $\tau = 10^{-2}$ and the initial mesh $\mathcal{T}_h^{(\ell)}$.
The obtained mesh is $\mathcal{T}_h^{(\ell+1)}$. Notice that the mesh connectivity is kept fixed during the time integration.
Thus, $\mathcal{T}_h^{(\ell+1)}$ has the same connectivity as $\mathcal{T}_h^{(\ell)}$.

\section{Numerical examples}
\label{SEC:numerics}

In this section we present numerical results obtained with GoFD described in the previous sections
for one 1D, three 2D, and one 3D examples. Three of those examples come from problem (\ref{FL-1}) with the following setting
in different dimensions,

\begin{equation}
\label{exam-0}
\Omega = B(0,1), \quad f =
\frac{2^{2 s} \Gamma(1+s + k) \Gamma(\frac{d}{2}+s + k)} {k! \; \Gamma(\frac{d}{2}+k)} \cdot
P_{k}^{s,\frac{d}{2}-1}\big(2 |\V{x}|^2-1\big ) ,
\end{equation}
where $P_{k}^{s,\frac{d}{2}-1}(\cdot)$ is the Jacobi polynomial of degree $k$ with parameters $(s,\frac{d}{2}-1)$
and $B(0,1)$ is a unit ball centered at the origin. Notice that $f$ is constant for $k = 0$.
This problem has an analytical exact solution
\begin{equation}
\label{exam-2}
u = (1-|\V{x}|^2)^s_+ P_{k}^{s,\frac{d}{2}-1}\big(2 |\V{x}|^2-1\big ).
\end{equation}

In this section, the solution error is plotted against $N$, the number of elements in $\mathcal{T}_h$. The convergence order is measured
in terms of $\bar{h} \equiv N_e^{-1/d}$, the average element diameter for both fixed and adaptive meshes. For a fixed (and almost
uniform) mesh, $\bar{h}$ is equivalent to $h$, the maximum element diameter while for an adaptive mesh, $\bar{h}$ makes more sense
since the elements can have very different diameters. Moreover, we take $R$
(half of the size of the overlay cube) as 1.1 times of half of the diameter of $\Omega$. We have tried 1.0 and 1.2 times and found
no significant difference in the computed solution.
 Furthermore, we take $h_{\text{FD}} = a_h$. This is larger than what is given in the condition (\ref{hFD-1}) but works well
for all examples we have tested. This relation also implies that $\mathcal{T}_h$ will become finer when $\mathcal{T}_h$ is refined.
Particularly, $h_{\text{FD}}$ can become very small for a highly adaptive mesh $\mathcal{T}_h$ with a small element height $a_h$.

\begin{exam}
\label{ex:5}
The first example is the 1D version of problem (\ref{exam-0}). For this problem, the FD scheme described in Section~\ref{SEC:uniformFD}
can be used for uniform meshes but not for adaptive ones.

We consider the case with $k = 0$. The solution error in $L^\infty$ and $L^2$ norm
is plotted in Fig.~\ref{fig:ex5-2} for fixed and adaptive meshes.
For fixed (uniform) meshes, the error behaves like $\mathcal{O}(h^{s})$ in $L^\infty$ norm
and $\mathcal{O}(h^{\min(1,0.5+s)})$ in $L^2$ norm. This is consistent with the observations made
by other researchers; cf. Remark~\ref{rem:FD-error}. The solution error is also shown for adaptive meshes.
Mesh adaptation improves accuracy and convergence order significantly. Indeed, the error decreases like
$\mathcal{O}(\bar{h}^{0.5+s})$ in $L^\infty$ norm and $\mathcal{O}(\bar{h}^{2})$ in $L^2$ norm for adaptive meshes.

Results for $k > 0$ show similar behavior. They are not included here to save space.
\qed
\end{exam}

\begin{figure}[ht]
\centering
\subfigure[$s = 0.25$, with FM]{
\includegraphics[width=0.22\textwidth]{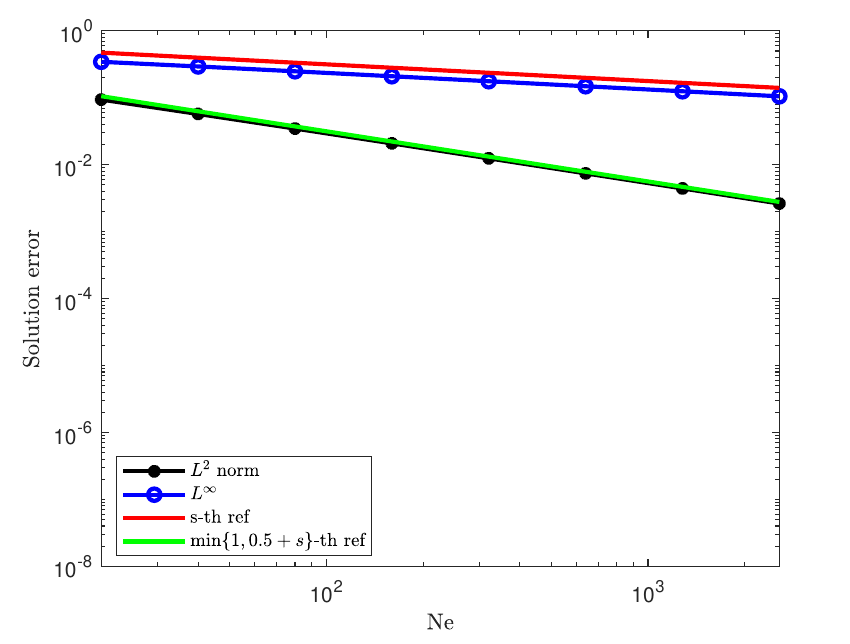}
}
\centering
\subfigure[$s = 0.5$, with FM]{
\includegraphics[width=0.22\textwidth]{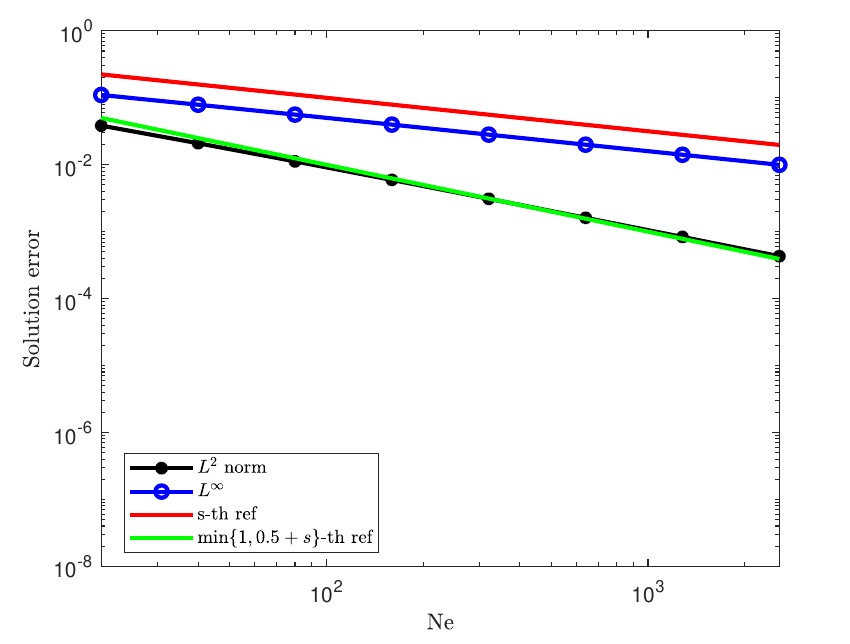}
}
\centering
\subfigure[$s = 0.75$, with FM]{
\includegraphics[width=0.22\textwidth]{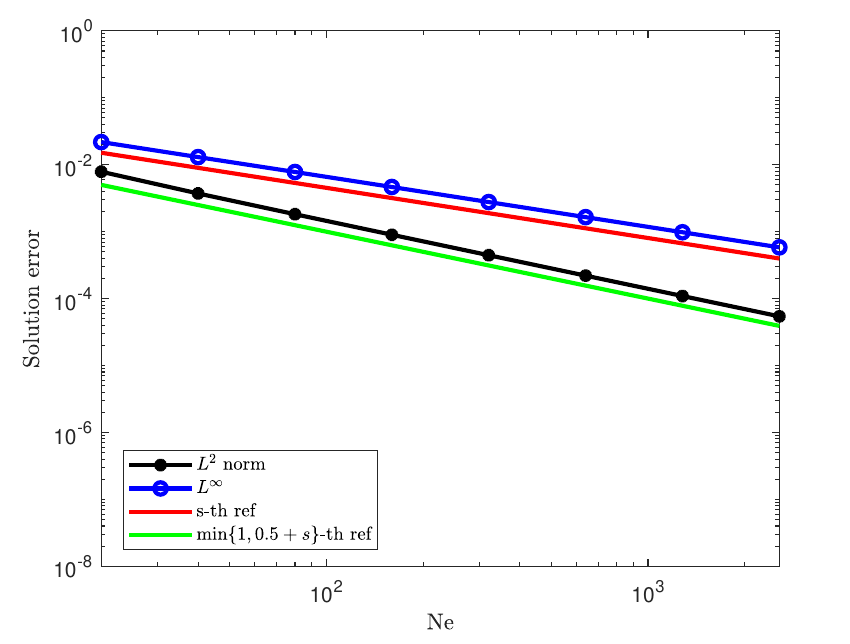}
}
\centering
\subfigure[$s = 0.95$, with FM]{
\includegraphics[width=0.22\textwidth]{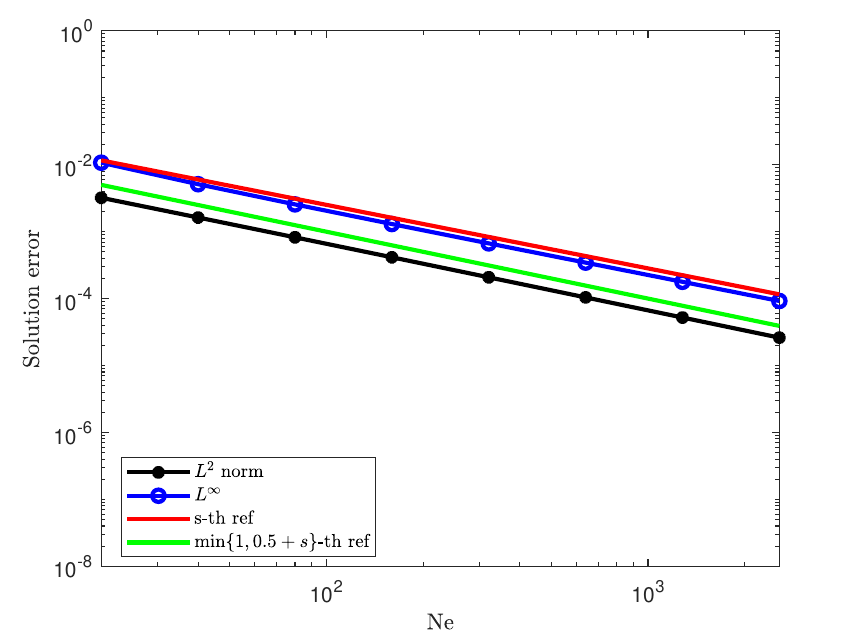}
}
\\
\subfigure[$s = 0.25$, with AM]{
\includegraphics[width=0.22\textwidth]{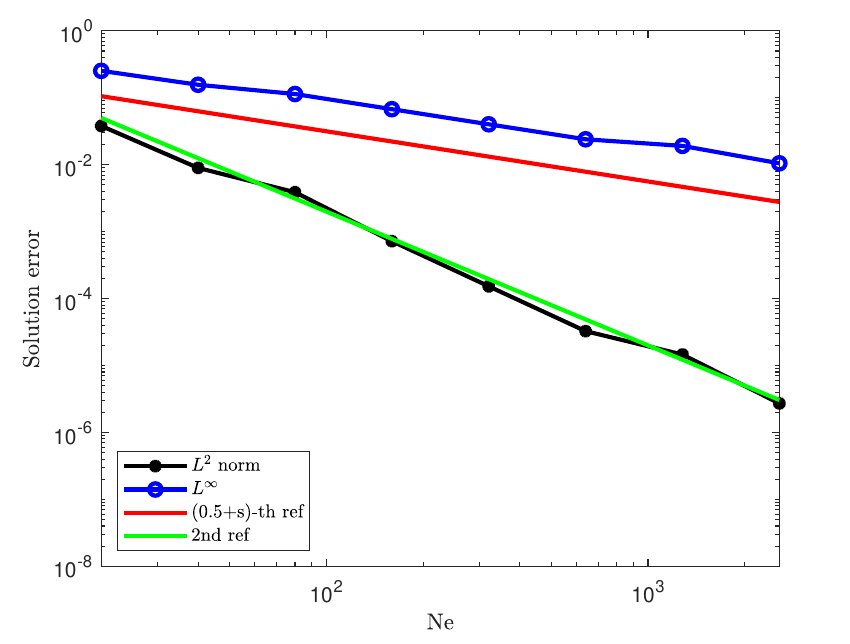}
}
\centering
\subfigure[$s = 0.5$, with AM]{
\includegraphics[width=0.22\textwidth]{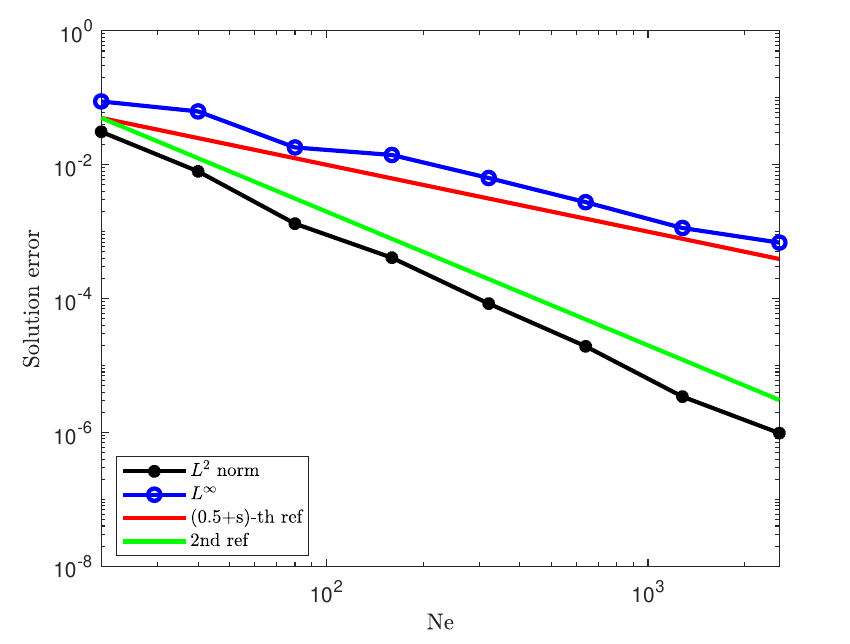}
}
\centering
\subfigure[$s = 0.75$, with AM]{
\includegraphics[width=0.22\textwidth]{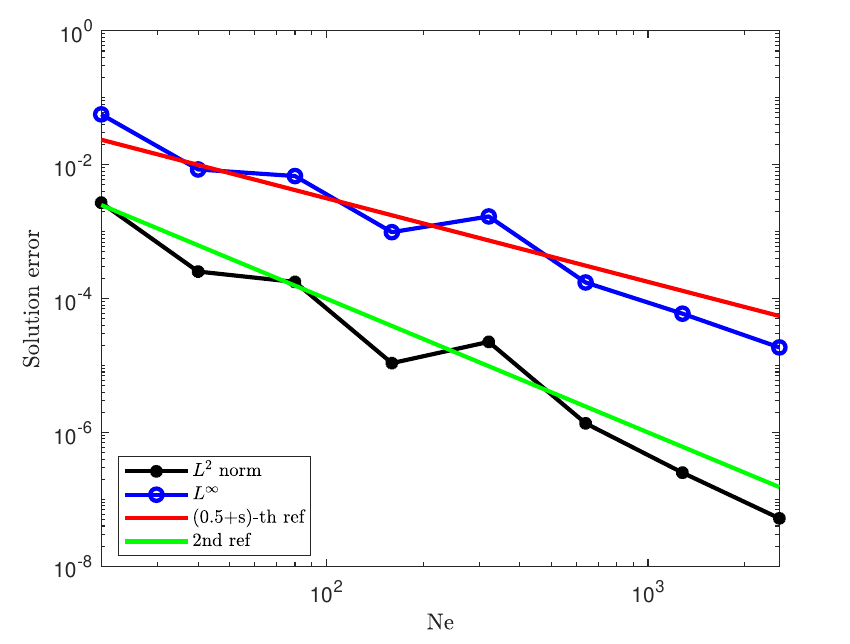}
}
\centering
\subfigure[$s = 0.95$, with AM]{
\includegraphics[width=0.22\textwidth]{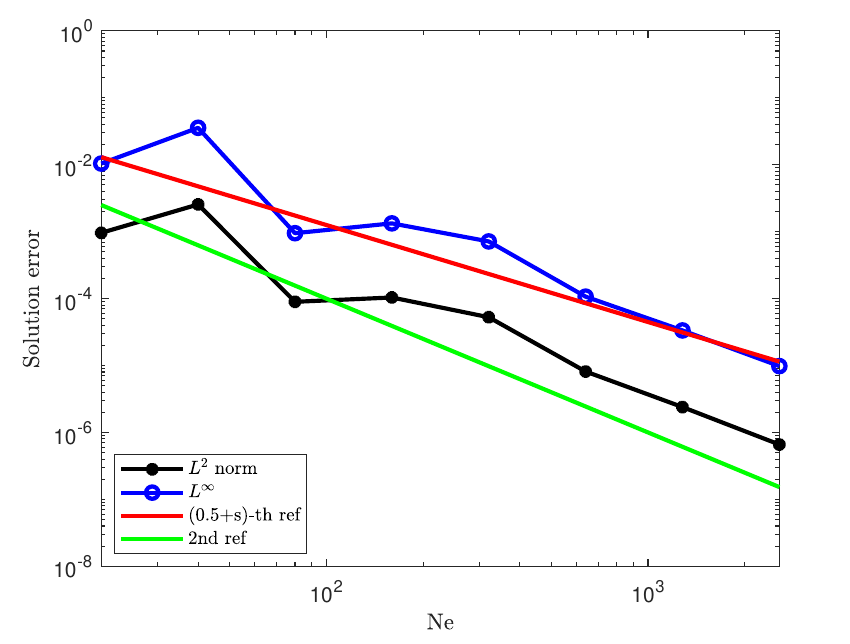}
}
\caption{Example~\ref{ex:5}. The solution error is plotted as a function of $N_e$ (the number of elements of $\mathcal{T}_h$)
for $k = 0$. FM stands for Fixed Mesh and AM stands for Adaptive Mesh.}
\label{fig:ex5-2}
\end{figure}

\begin{exam}\label{ex:1}
The second example is the 2D version of (\ref{exam-0}).
We consider two cases with $k = 0$ and $5$ and $s = 0.5$.
Fig.~\ref{fig:ex1-1} shows computed solutions.
The convergence histories are shown in Fig.~\ref{fig:ex1-2}.
The $L^2$ norm of the solution error converges like $\mathcal{O}(h)$ for fixed meshes.
This is consistent with finite element approximations (cf. Remark~\ref{rem:FEM-rate}) since in this case with $s = 0.5$,
$\mathcal{O}(h^{\min(1,0.5+s)}) = \mathcal{O}(h)$.
On the other hand, the error is second oder, i.e., $\mathcal{O}(\bar{h}^2)$,
for adaptive meshes. This is higher than the expected rate
$\mathcal{O}(\bar{h}^{1+s})=\mathcal{O}(\bar{h}^{1.5})$ (cf. Remark~\ref{rem:FEM-rate}). Higher accuracy with mesh adaptation
can also be observed from the computed solutions. For instance, oscillations are visible in Fig.~\ref{fig:ex1-1}(c)
but not in Fig.~\ref{fig:ex1-1}(d). Examples of adaptive mesh are shown in Fig.~\ref{fig:ex1-0}.

We now examine the effectiveness of the preconditioner described in Section~\ref{SEC:preconditioner}. The convergence history
for the conjugate gradient method (CG) with/without preconditioning is shown in Fig.~\ref{fig:ex1-3}. We can see that the preconditioner
reduces the number of iterations significantly. Moreover, the preconditioner is more effective when $s$ is closer to 1.
Meanwhile, a smaller number of iterations is required to reach the same accuracy for $s = 0.5$ than $s = 0.9$.
These observations are consistent with the fact that the fractional Laplacian approaches
to the Laplacian as $s \to 1$ and the identity operator as $s \to 0$. As a result,
the stiffness matrix of the FD approximation has a smaller condition number and the corresponding
linear system is easier to solve for smaller $s$.
Moreover, the preconditioner, whose pattern is based on that of the FD discretion of the Laplacian, can be expected to be more
effective when the fractional Laplacian is closer to the Laplacian.

The CG convergence history is also plotted in Fig.~\ref{fig:ex1-4} for the preconditioner based on the 5-point pattern.
This preconditioner is slightly less effective than that based on the 9-point pattern.
\qed
\end{exam}

\begin{figure}[ht!]
\centering
\subfigure[$k=0$, with FM]{
\includegraphics[width=0.22\textwidth]{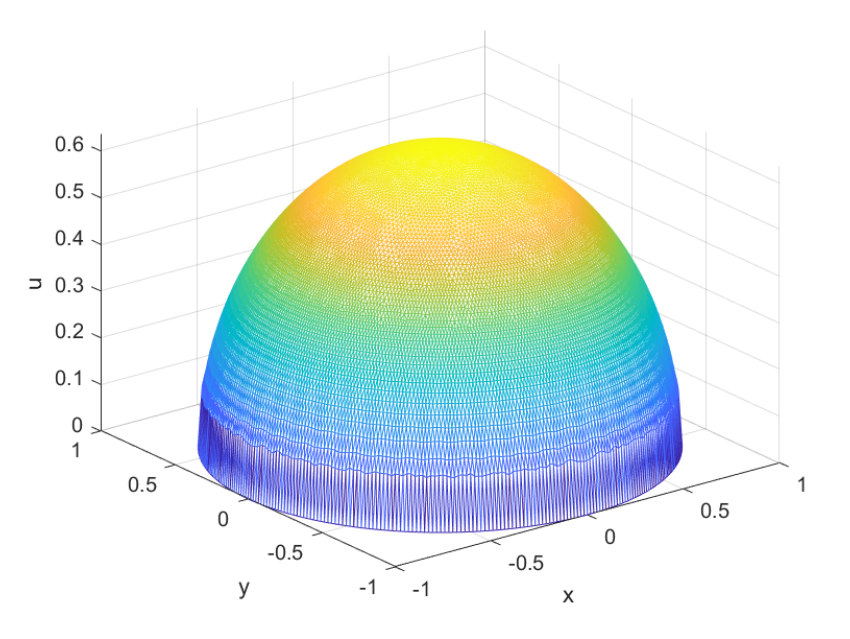}
}
\subfigure[$k=0$, with AM]{
\includegraphics[width=0.22\textwidth]{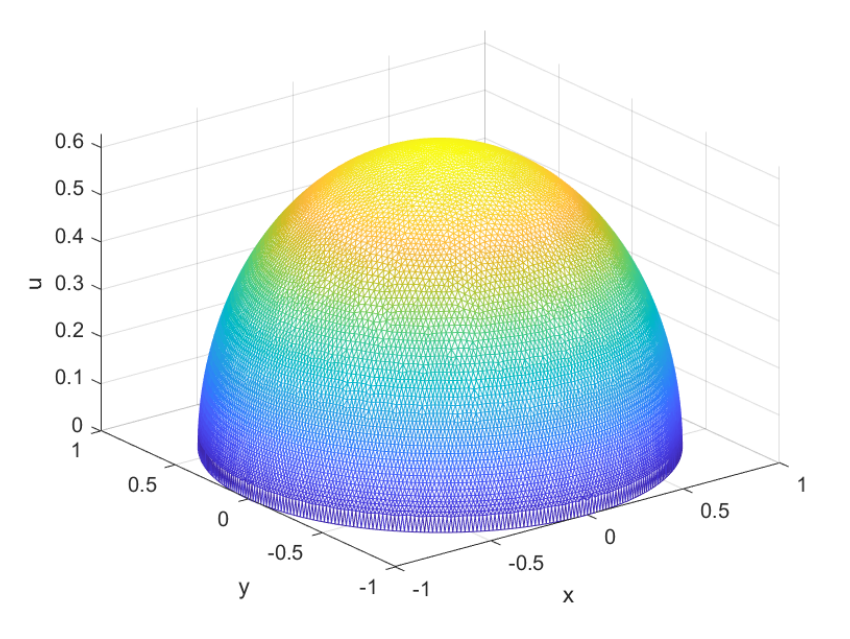}
}
\subfigure[$k=5$, with FM]{
\includegraphics[width=0.22\textwidth]{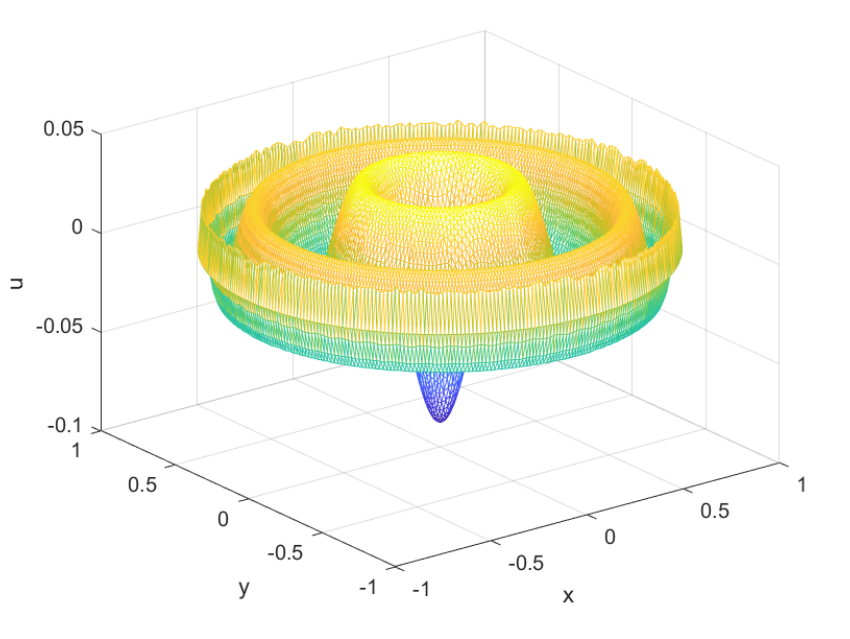}
}
\subfigure[$k=5$, with AM]{
\includegraphics[width=0.22\textwidth]{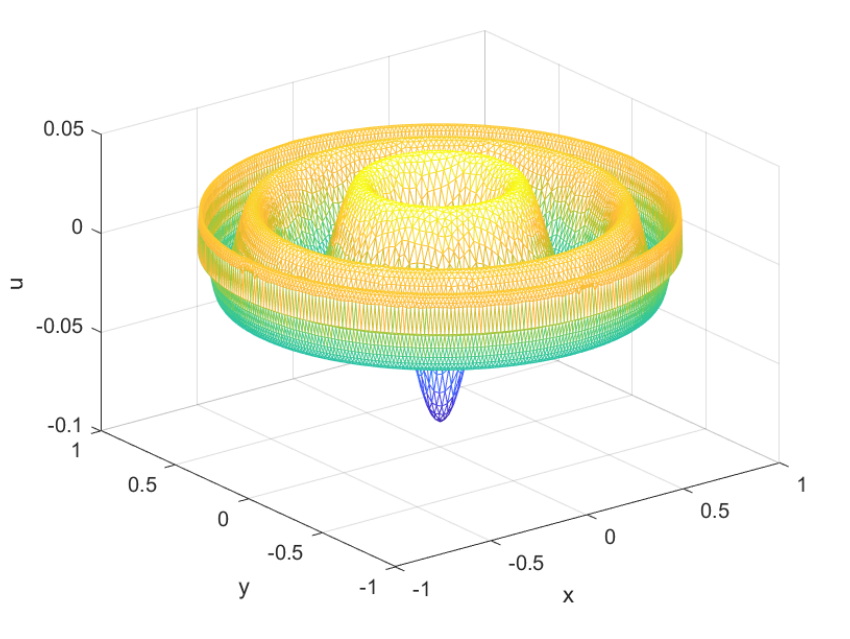}
}
\caption{Example~\ref{ex:1}. Computed solutions obtained with meshes of  $N_e=27130$ for $s=0.5$. FM: fixed mesh and AM: adaptive mesh.}
\label{fig:ex1-1}
\end{figure}

\begin{figure}[ht!]
\centering
\subfigure[$k=0$]{
\includegraphics[width=0.25\textwidth]{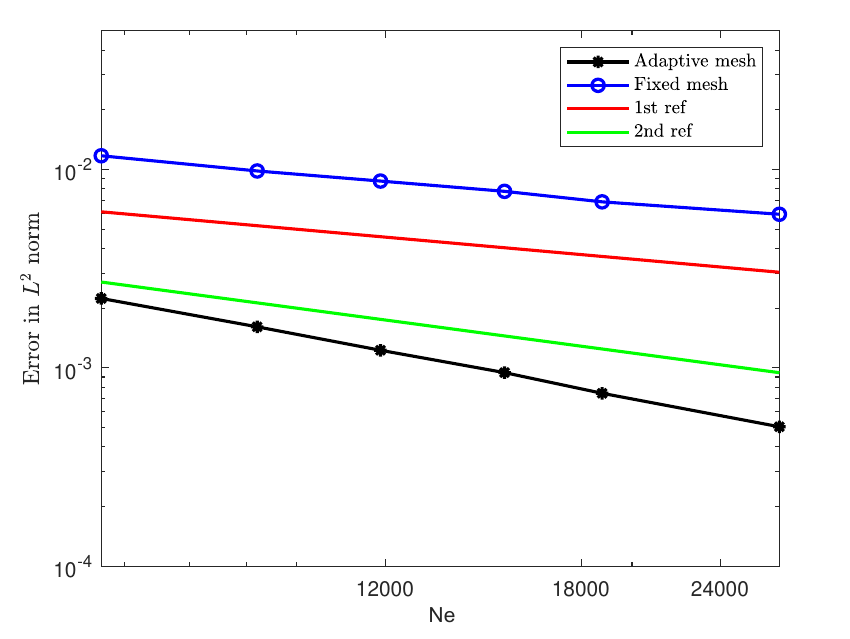}
}
\quad
\subfigure[$k=5$]{
\includegraphics[width=0.25\textwidth]{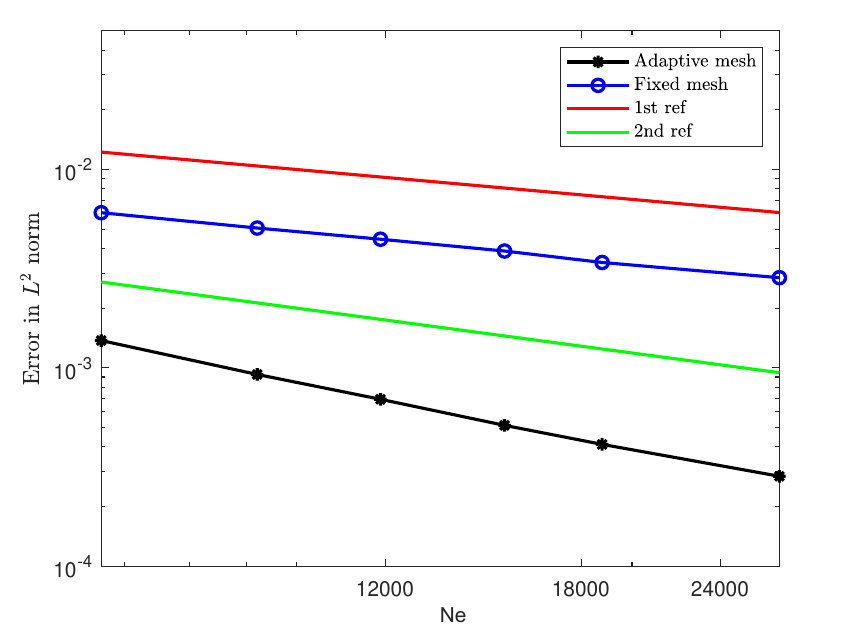}
}
\caption{Example~\ref{ex:1}. The $L^2$ norm of the solution error is plotted as a function of $N_e$ for $k=0$ and 5 and
$s=0.5$ with and without mesh adaptation.
}
\label{fig:ex1-2}
\end{figure}

\begin{figure}[ht!]
\centering
\subfigure[$k=0$]{
\includegraphics[width=0.25\textwidth]{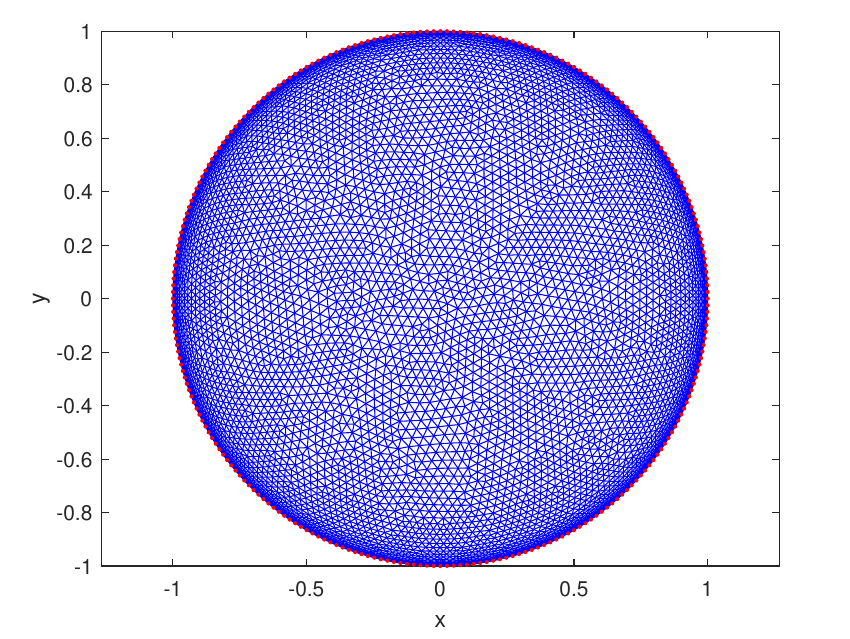}
}
\quad
\subfigure[$k=5$]{
\includegraphics[width=0.25\textwidth]{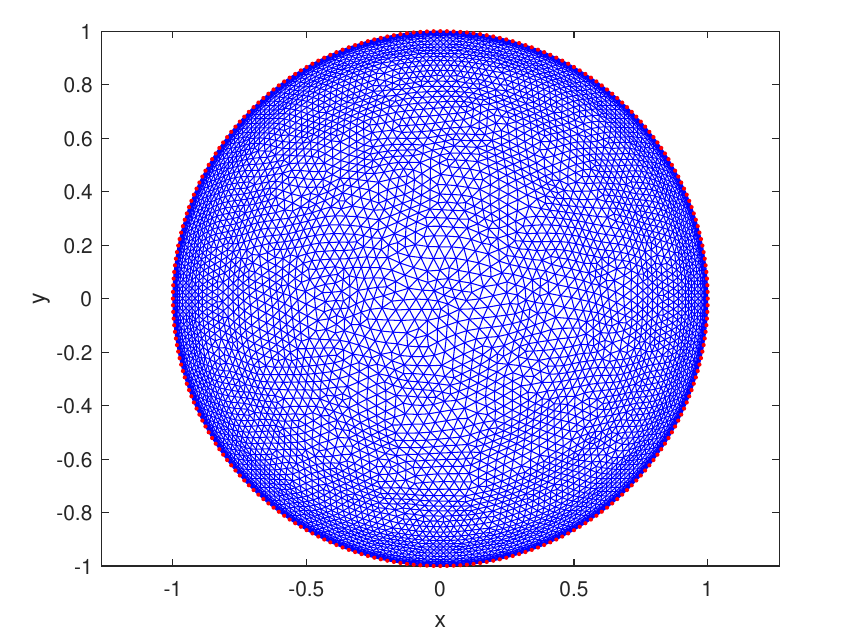}
}
\caption{Example~\ref{ex:1}. Adaptive meshes of $N_e=11886$ for $s=0.5$.}
\label{fig:ex1-0}
\end{figure}

\begin{figure}[ht!]
\centering
\subfigure[$k=0$, $s = 0.5$]{
\includegraphics[width=0.22\textwidth]{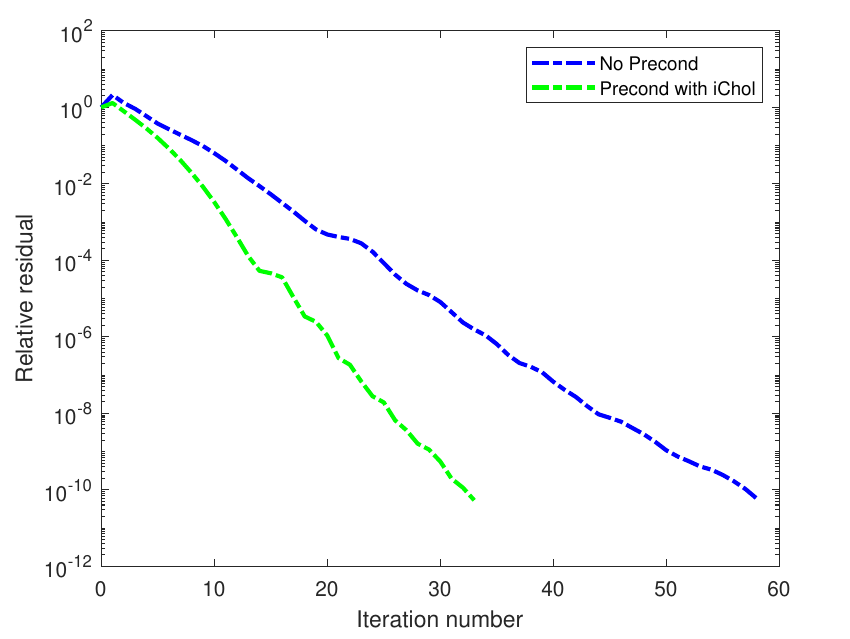}
}
\subfigure[$k=5$, $s = 0.5$]{
\includegraphics[width=0.22\textwidth]{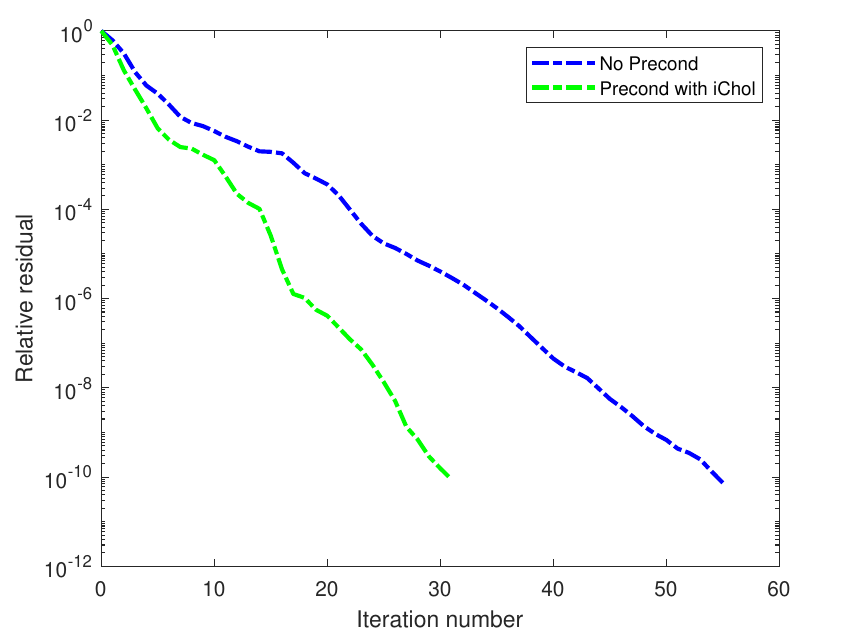}
}
\subfigure[$k=0$, $s = 0.9$]{
\includegraphics[width=0.22\textwidth]{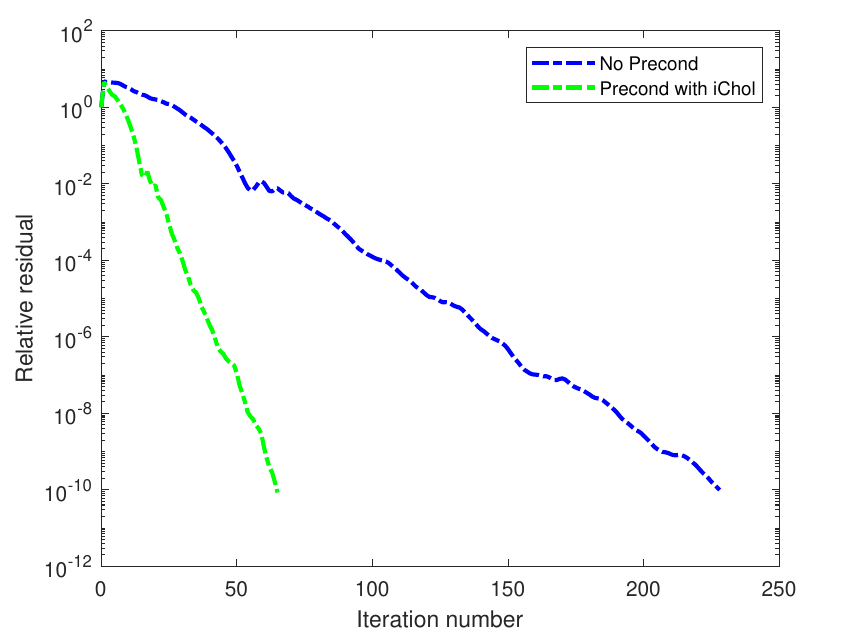}
}
\subfigure[$k=5$, $s = 0.9$]{
\includegraphics[width=0.22\textwidth]{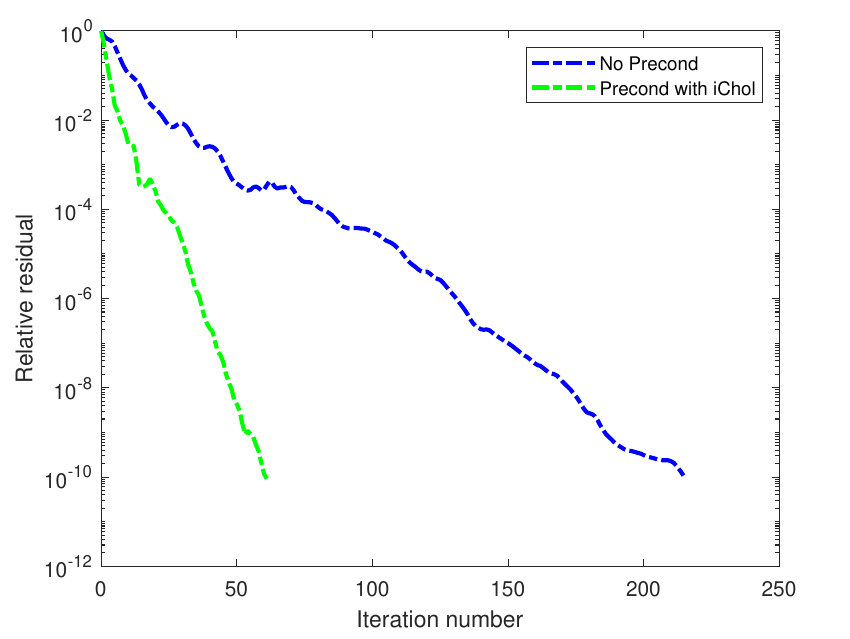}
}
\caption{Example~\ref{ex:1}. The CG convergence history is plotted for a fixed
mesh of $N_e=27130$ and with/without preconditioning (9-point pattern).}
\label{fig:ex1-3}
\end{figure}

\begin{figure}[ht!]
\centering
\subfigure[$k=0$]{
\includegraphics[width=0.22\textwidth]{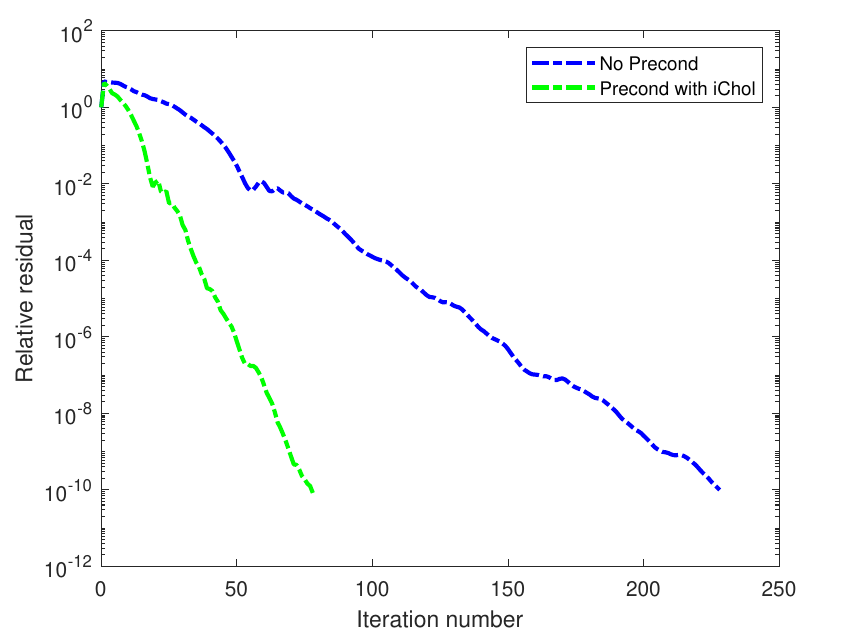}
}
\subfigure[$k=5$]{
\includegraphics[width=0.22\textwidth]{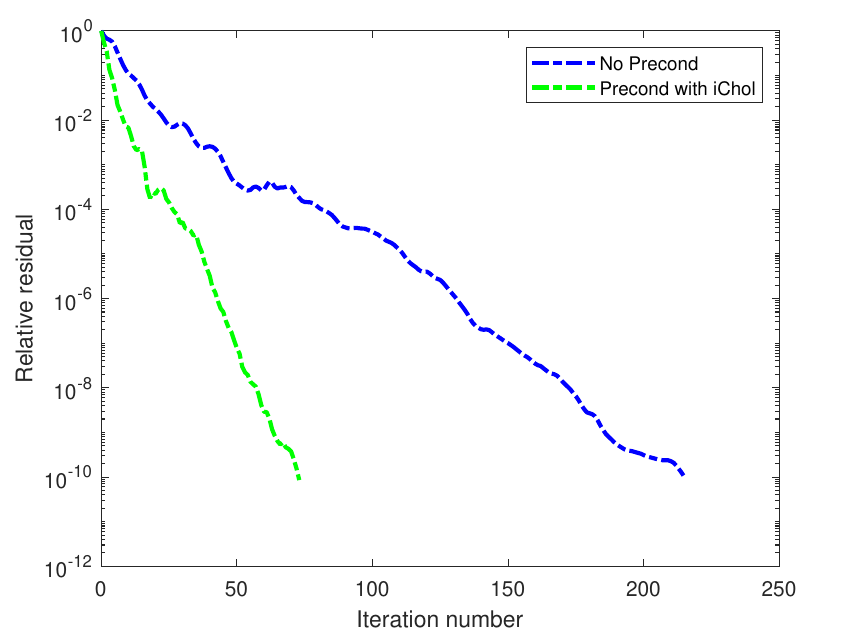}
}
\caption{Example~\ref{ex:1}. The CG convergence history is plotted for $k=0$ and 5 and $s=0.9$ with a non-adaptive
mesh of $N_e=27130$ and with/without preconditioning (5-point pattern).}
\label{fig:ex1-4}
\end{figure}

\begin{exam}\label{ex:4}
Next we consider the 3D version of (\ref{exam-0}) for $k = 0$ and $s = 0.5$.
Fig.~\ref{fig:ex4-3}(a) shows the convergence history in $L^2$ norm.
The solution error converges slightly lower than $\mathcal{O}(h)$
for fixed meshes and $\mathcal{O}(\bar{h}^{1.5})$ for adaptive meshes.
For this example, we take the 27-point pattern to build the preconditioner.  The CG convergence histories
shown in Fig.~\ref{fig:ex4-3}(b) and (c)
demonstrate the effectiveness of the preconditioner.
\qed
\end{exam}

\begin{figure}[ht!]
\centering
\subfigure[$s = 0.5$, soln error]{
\includegraphics[width=0.22\textwidth]{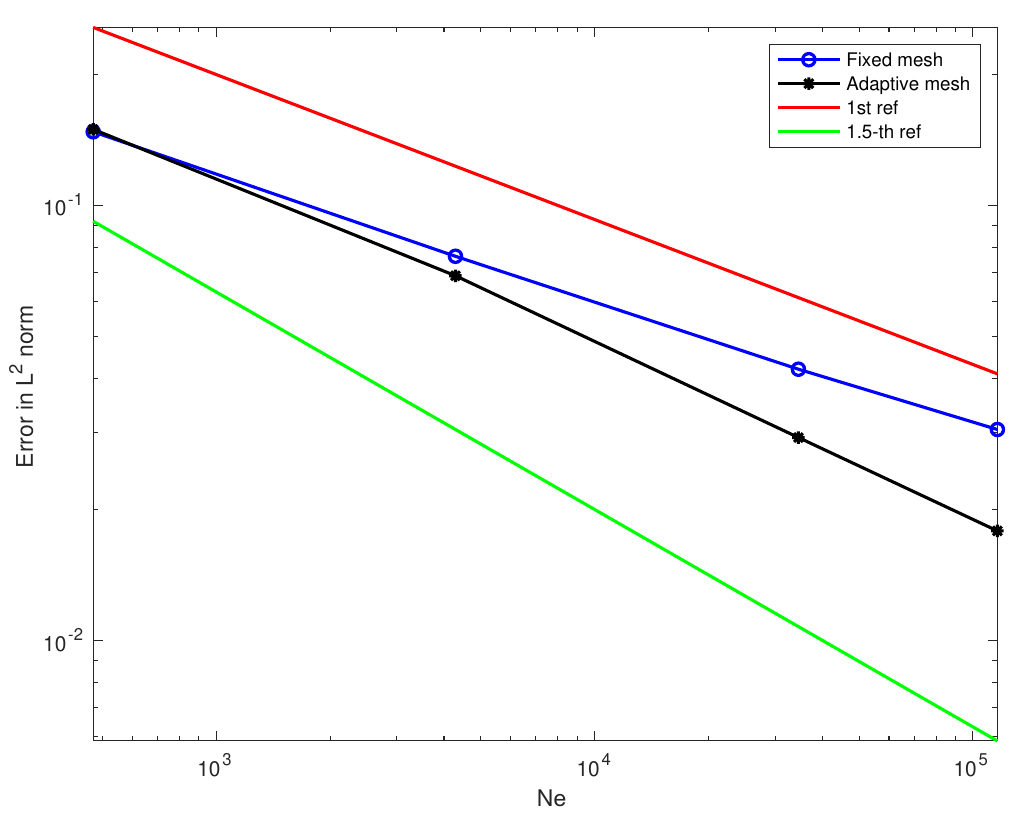}
}
\quad
\subfigure[$s = 0.5$, CG convg,]{
\includegraphics[width=0.22\textwidth]{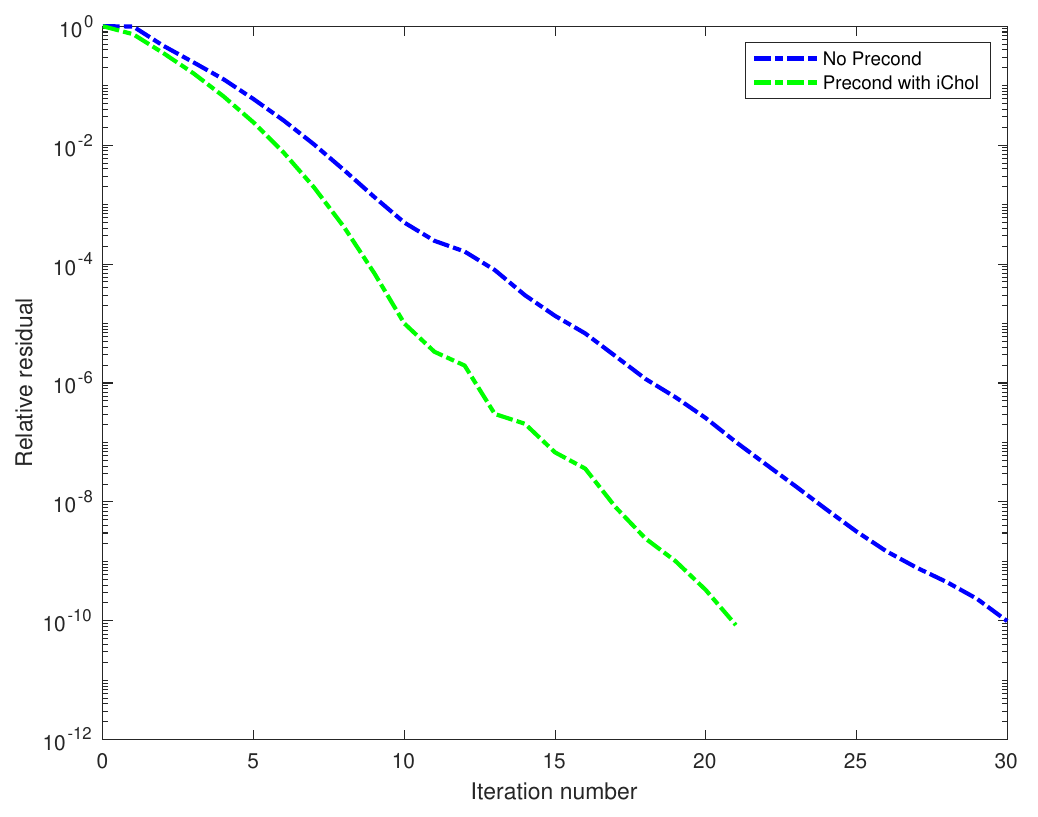}
}
\quad
\subfigure[$s = 0.9$, CG convg.]{
\includegraphics[width=0.22\textwidth]{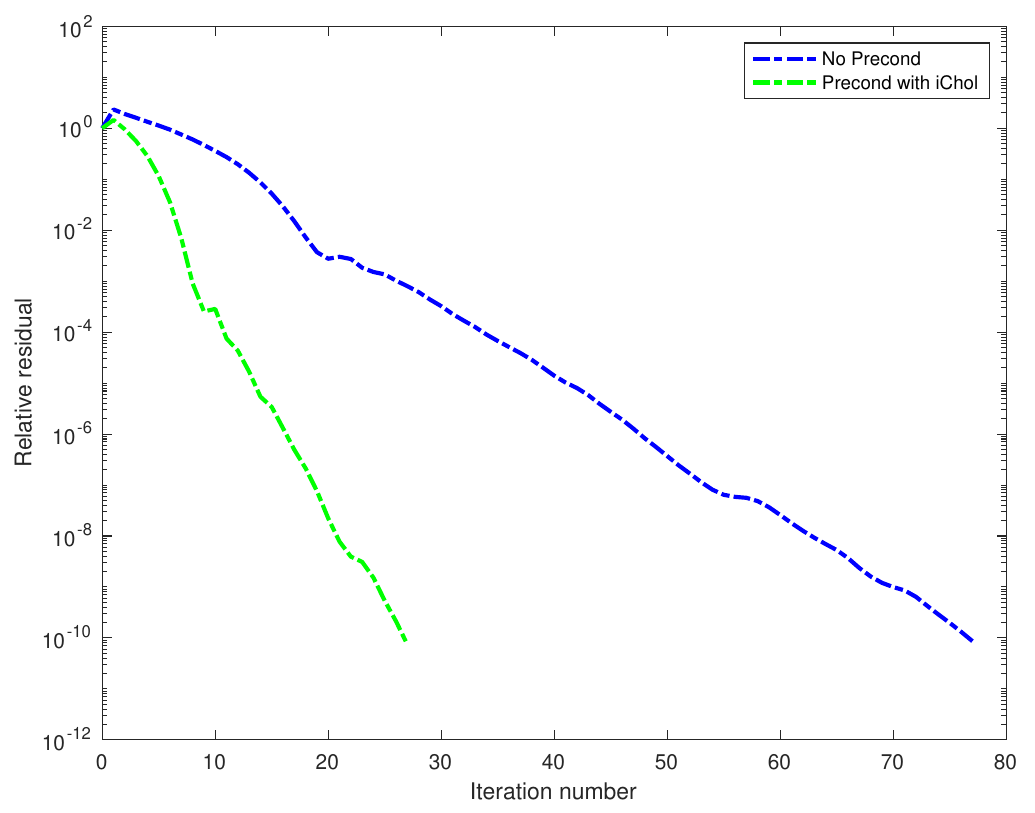}
}
\caption{Example~\ref{ex:4}. (a): The solution error as function of $N$ and (b) and (c): CG convergence histories for a fixed mesh of $N_e = 116054$ with and without preconditioning.}
\label{fig:ex4-3}
\end{figure}

\begin{exam}\label{ex:2}
This example is (\ref{FL-1}) with $f = 1$ and $\Omega$ as shown in Fig.~\ref{fig:gridoverlay-1} with $s = 0.75$.
The geometry of $\Omega$ is complex, with the wavering outside boundary and two holes inside.
An analytical exact solution is not available for this example. A computed solution with an adaptive mesh of $N_e=250948$
is used as the reference solution. Numerical results are shown in Fig.~\ref{fig:ex2-1}. The solution error in $L^2$ norm
is about $\mathcal{O}(h)$ for fixed meshes and $\mathcal{O}(\bar{h}^2)$ for adaptive meshes. This example demonstrates
that GoFD works well with complex geometries.
\qed
\end{exam}

\begin{figure}[ht!]
\centering
\subfigure[Adaptive mesh]{
\includegraphics[width=0.22\textwidth]{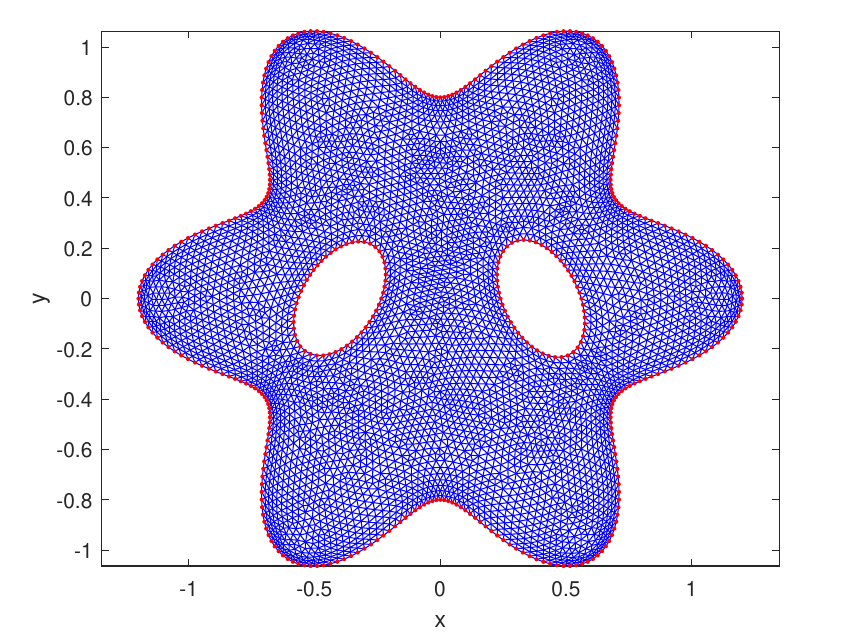}
}
\subfigure[Computed solution]{
\includegraphics[width=0.22\textwidth]{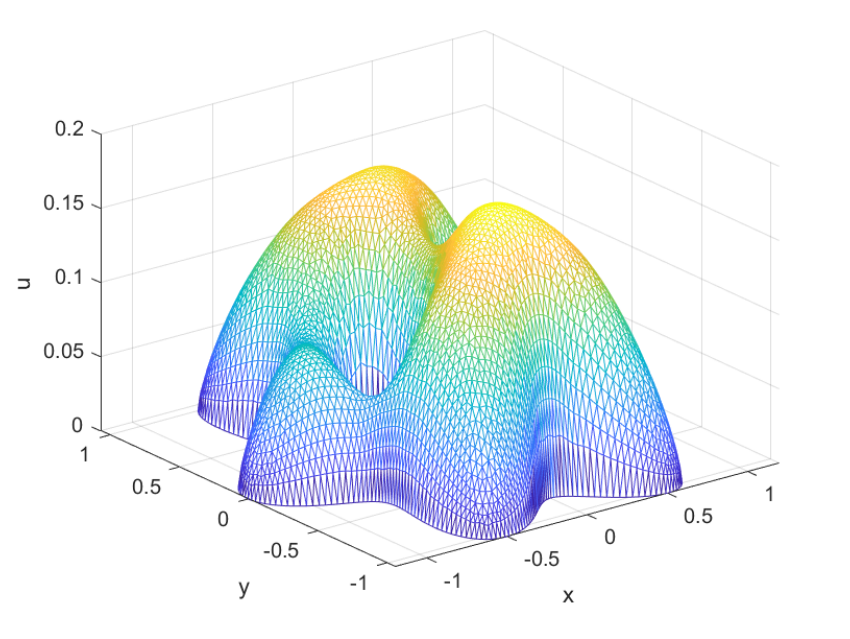}
}
\subfigure[Solution error]{
\includegraphics[width=0.22\textwidth]{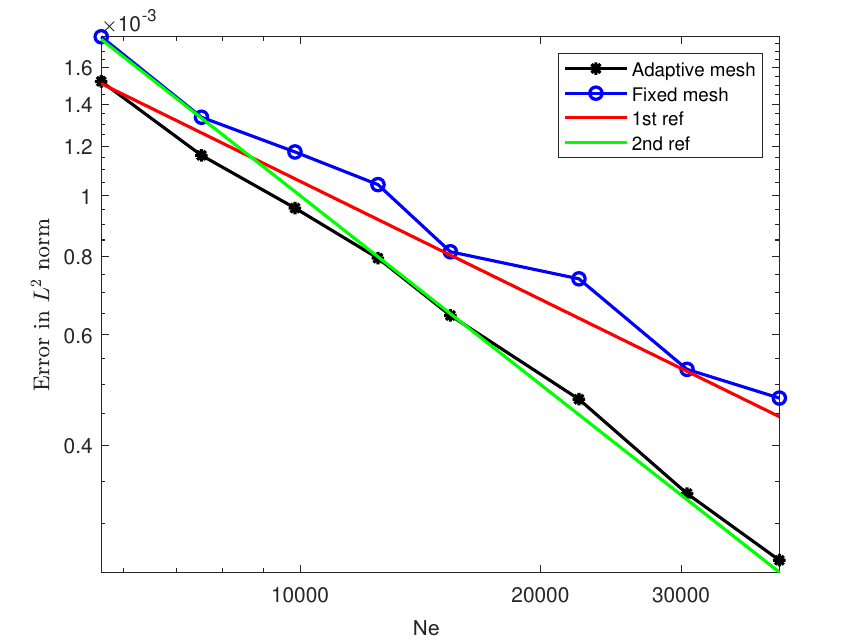}
}
\caption{Example~\ref{ex:2}.
(a): An adaptive mesh of $N_e=9850$, (b) the corresponding computed solution, and
(c) the solution error as function of $N_e$ for ${\color{magenta}s}=0.75$.}
\label{fig:ex2-1}
\end{figure}

\begin{exam}\label{ex:3}
This example is (\ref{FL-1}) with $f = 1$ and $\Omega$ being $L$-shaped with ${\color{magenta}s} = 0.5$.
An analytical exact solution is not available for this example. A computed solution obtained with an adaptive mesh of $N_e=417508$
is used as the reference solution.
Numerical results are shown in Figs.~\ref{fig:ex3-1}.
The solution error in $L^2$ norm is about $\mathcal{O}(h)$
for fixed meshes and $\mathcal{O}(\bar{h}^2)$ for adaptive meshes.
\qed
\end{exam}

\begin{figure}[ht!]
\centering
\subfigure[Adaptive mesh]{
\includegraphics[width=0.22\textwidth]{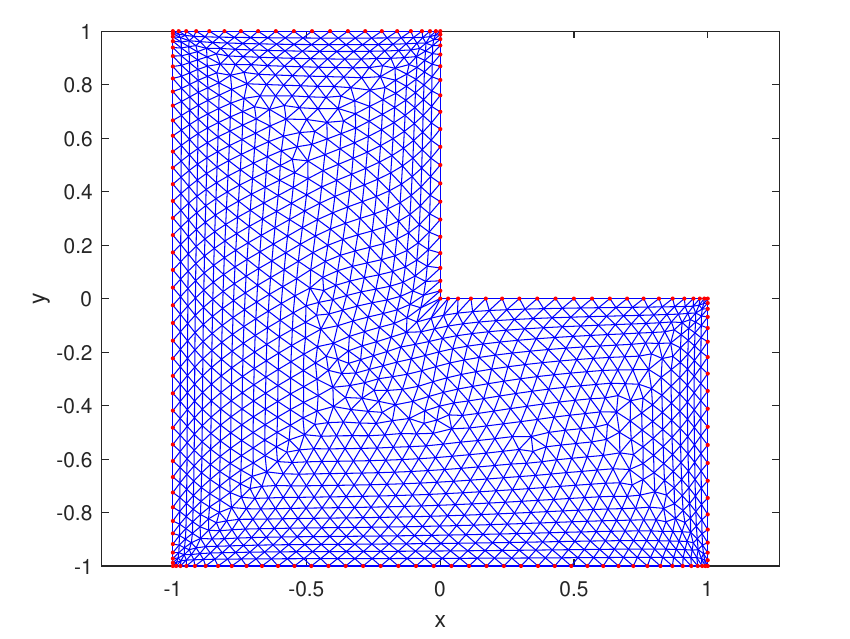}
}
\quad
\subfigure[Computed solution]{
\includegraphics[width=0.22\textwidth]{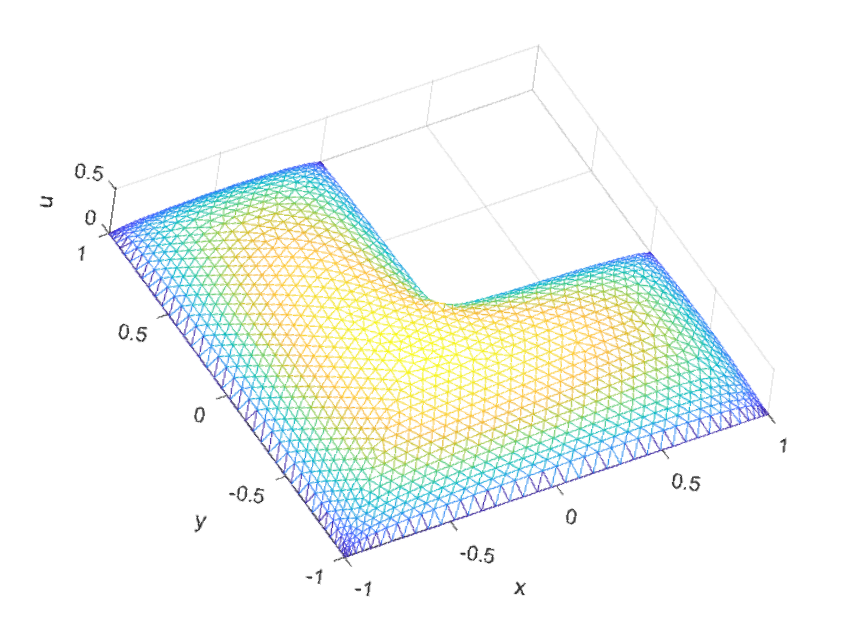}
}
\quad
\subfigure[Solution error]{
\includegraphics[width=0.22\textwidth]{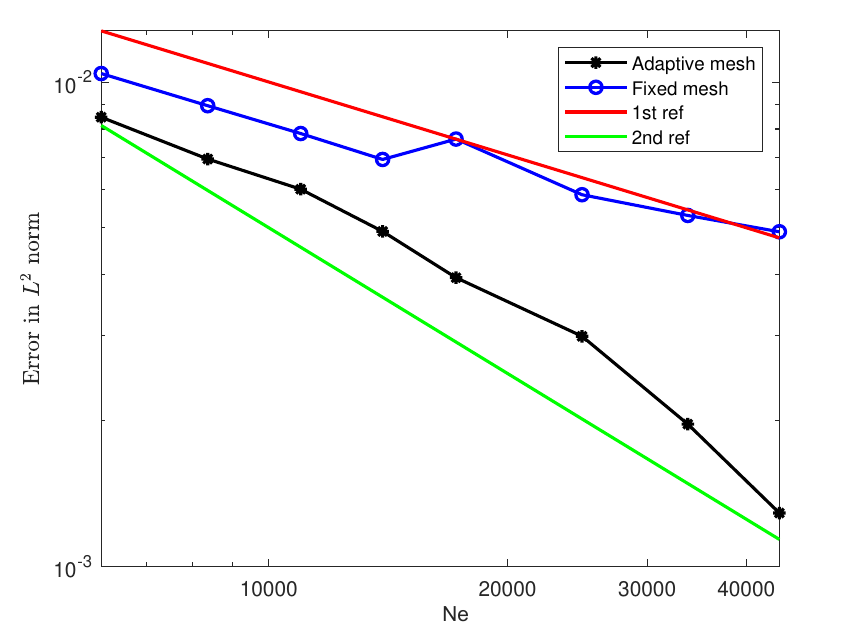}
}
\caption{Example~\ref{ex:3}. (a): An adaptive mesh of $N_e=2728$, (b) the corresponding computed solution, and
(c) the solution error as function of $N_e$ for $s=0.5$.}
\label{fig:ex3-1}
\end{figure}

\section{Conclusions and further comments}
\label{SEC:conclusions}

In the previous sections we have studied a grid-overlay finite difference method (GoFD) for the numerical approximation of
the fractional Laplacian on arbitrary bounded domains. The method uses an unstructured mesh
and an overlaying uniform grid and constructs the approximation
matrix $A_h$ (cf. (\ref{Ah-1})) based on the uniform-grid FD approximation $A_{\text{FD}}$ (cf. (\ref{T-1}) and (\ref{A-1}))
and the transfer matrix $I_h^{\text{FD}}$ from the unstructured mesh to the uniform grid.
The multiplication of $A_h$ with vectors can be carried out efficiently using FFT and sparse-matrix-vector multiplication.
A main result is Theorem~\ref{thm:Ah-1} stating that $A_h$ is similar to a symmetric and positive definite matrix
(and thus invertible) if $I_h^{\text{FD}}$ has full column rank and positive column sums.
A special choice of $I_h^{\text{FD}}$ is piecewise linear interpolation. Theorem~\ref{thm:IhFD} states that
the full column rank and positive column sums are guaranteed for this special choice if the spacing of the uniform grid
satisfies (\ref{hFD-1}). Stability and preconditioning for the resulting linear system have been discussed.

GoFD retains the efficient matrix-vector multiplication advantage of uniform-grid FD methods
for the fraction Laplacian while being able to work for domains with complex geometries.
Meanwhile, the method can readily be combined with existing adaptive mesh strategies due to its use of
unstructured meshes. We have discussed in Section~\ref{SEC:MMPDE} how to combine GoFD
with the MMPDE moving mesh method.

Numerical results have been presented for a selection of 1D, 2D and 3D examples. They have demonstrated
that GoFD is feasible and convergent and has a convergence order
of $\mathcal{O}(h^{\min(1,0.5+s)})$ in $L^2$ norm for fixed meshes.
This is consistent with observations known for existing uniform-grid FD and finite element methods.
With adaptive meshes, the method shows second-order convergence in 1D and 2D and close to
$\mathcal{O}(\bar{h}^{1+s})$ in 3D.
The numerical results have also demonstrated that the preconditioners based on the sparsity pattern of the Laplacian
(cf. Section~\ref{SEC:preconditioner}) are effective in terms of reducing the number of iterations required to reach
the commensurate accuracy.

Finally we comment that we have used unstructured simplicial meshes for $\Omega$ in this work.
The use of simplicial meshes makes it relatively simpler to prove  the full column rank of $I_h^{\text{FD}}$
and implement the transfer. However, it is not necessary to use simplicial meshes.
We can use any other boundary fitted meshes or even meshless points.  Particularly, we can take
$\mathcal{T}_h$ as a graded mesh. Moreover,
we can use data transfer schemes other than linear interpolation that has been considered in this work.
These are interesting topics worth future investigations.

\section*{Acknowledgments}
The first and second authors were supported in part by the University of Kansas General Research Fund FY23 and the National Natural Science Foundation of China through grant [12101509], respectively.

\bibliographystyle{siamplain}

\end{document}